\renewcommand{\theta}{\uptheta}
\renewcommand{\iota}{\upiota}
\renewcommand{\alpha}{\upalpha}
\renewcommand{\beta}{\upbeta}
\renewcommand{\gamma}{\upgamma}
\renewcommand{\delta}{\updelta}
\renewcommand{\zeta}{\upzeta}
\renewcommand{\pi}{\uppi\hspace{0.05em}}
\renewcommand{\xi}{\upxi}
\renewcommand{\chi}{\upchi}
\renewcommand{\sigma}{\upsigma}
\renewcommand{\Lambda}{\Uplambda}
\renewcommand{\Gamma}{\Upgamma}
\renewcommand{\phi}{\upphi}
\renewcommand{\nu}{\upnu}
\renewcommand{\tau}{\uptau}
\renewcommand{\mu}{\upmu}
\renewcommand{\eta}{\upeta}
\newtheorem{theorem}{Theorem}[section]
\newtheorem{thmx}{Theorem}
\newtheorem{proposition}[theorem]{Proposition}
\newtheorem{lemma}[theorem]{Lemma}
\newtheorem{corollary}[theorem]{Corollary}
\theoremstyle{remark}
\newtheorem{remark}[theorem]{Remark}
\newtheorem{example}[theorem]{Example}
\newtheorem{question}[theorem]{Question}
\theoremstyle{definition}
\newcommand{\dvs}{\mathbb{N}^{Q_0}}
\renewcommand{\AA}{\mathbb{A}}
\newcommand{\CC}{\mathbb{C}}
\newcommand{\LLL}{\mathscr{L}}
\newcommand{\kac}{\mathtt{a}}
\newcommand{\dd}{\mathbf{d}}
\newcommand{\ee}{\mathbf{e}}
\newcommand{\JH}{\mathtt{JH}}
\newcommand{\ff}{\mathbf{f}}
\DeclareMathOperator{\opp}{op}
\newcommand{\Mst}{\mathfrak{M}}
\newcommand{\Msp}{\mathcal{M}}
\newcommand{\ICS}{\mathcal{IC}}
\newcommand{\DTS}{\mathcal{BPS}}
\newcommand{\phim}[1]{\phi^{\mon}_{#1}}
\newcommand{\Up}{{}^{\mathfrak{p}}\!}
\newcommand{\phin}[1]{\Up\phi_{#1}}
\newcommand{\psin}[1]{\Up\psi_{#1}}
\DeclareMathOperator{\Flag}{\mathcal{F}l}
\DeclareMathOperator{\stab}{st}
\DeclareMathOperator{\Even}{even}
\DeclareMathOperator{\Odd}{odd}
\DeclareMathOperator{\hdg}{hdg}
\DeclareMathOperator{\crit}{crit}
\DeclareMathOperator{\Hom}{Hom}
\DeclareMathOperator{\mon}{mon}
\DeclareMathOperator{\simp}{simp}
\DeclareMathOperator{\MMHM}{\mathbf{MMHM}}
\DeclareMathOperator{\Id}{Id}
\DeclareMathOperator{\rat}{\mathbf{rat}}
\DeclareMathOperator{\lMod}{-Mod}
\DeclareMathOperator{\supp}{supp}
\DeclareMathOperator{\con}{con}
\DeclareMathOperator{\Perv}{\mathbf{Perv}}
\DeclareMathOperator{\MHM}{\mathbf{MHM}}
\DeclareMathOperator{\DT}{BPS}
\DeclareMathOperator{\Sym}{\mathbf{Sym}}
\DeclareMathOperator{\op}{op}
\DeclareMathOperator{\Spec}{Spec}
\DeclareMathOperator{\Gl}{GL}
\DeclareMathOperator{\gl}{\mathfrak{gl}}
\DeclareMathOperator{\Sl}{SL}
\DeclareMathOperator{\id}{id}
\DeclareMathOperator{\Jac}{Jac}
\DeclareMathOperator{\Tr}{Tr}
\DeclareMathOperator{\pt}{pt}
\DeclareMathOperator{\Tot}{Tot}
\DeclareMathOperator{\cone}{cone}
\DeclareMathOperator{\vir}{vir}
\DeclareMathOperator{\Ob}{Ob}
\DeclareMathOperator{\Ho}{\mathcal{H}}
\DeclareMathOperator{\HO}{\mathbf{H}}
\DeclareMathOperator{\Coha}{\mathcal{A}}
\DeclareMathOperator{\rCoha}{\mathcal{RA}}
\DeclareMathOperator{\TS}{\mathtt{TS}}
\DeclareMathOperator{\Coh}{Coh}
\DeclareMathOperator{\KW}{\mathtt{KW}}
\DeclareMathOperator{\BPS}{BPS}
\DeclareMathOperator{\poin}{\mathtt{p}}
\newcommand{\Db}{\mathcal{D}^{\mathrm{b}}}
\newcommand{\Dbfg}{\mathcal{D}^{\mathrm{b}}_{\mathrm{fg}}}
\title[A boson-fermion correspondence in cohomological Donaldson--Thomas theory]{A boson-fermion correspondence in cohomological Donaldson--Thomas theory}
\author{Ben Davison}
\address{School of Mathematics\\
University of Edinburgh\\
Edinburgh EH9 3FD\\
UK}
\email{Ben.Davison{\char'100}ed.ac.uk }
\begin{document}

\begin{abstract}
We introduce and study a fermionization procedure for the cohomological Hall algebra $\mathcal{H}_{\Pi_Q}$ of representations of a preprojective algebra, that selectively switches the cohomological parity of the BPS Lie algebra from even to odd.  We do so by determining the cohomological Donaldson--Thomas invariants of central extensions of preprojective algebras studied in the work of Etingof and Rains, via deformed dimensional reduction.  
\smallbreak
Via the same techniques, we determine the Borel--Moore homology of the stack of representations of the $\mu$-deformed preprojective algebra introduced by Crawley--Boevey and Holland, for all dimension vectors.  This provides a common generalisation of the results of Crawley-Boevey and Van den Bergh on the cohomology of smooth moduli schemes of representations of deformed preprojective algebras, and my earlier results on the Borel--Moore homology of the stack of representations of the undeformed preprojective algebra.
\end{abstract}
\maketitle

\setcounter{tocdepth}{1}
\tableofcontents  

\section{Introduction}
Given a quiver $Q$, in \cite{KS2} Kontsevich and Soibelman define the cohomological Hall algebra 
\begin{equation}
\label{hwg}
\Coha_Q=\bigoplus_{\dd\in\dvs}\HO(\Mst_{\dd}(Q),\mathbb{Q})[-\chi_Q(\dd,\dd)],
\end{equation}
which has as underlying vector space the singular cohomology of the stack of finite-dimensional complex representations of $Q$, shifted in cohomological degree by the Euler form (see \eqref{chi_def} for the definition).  As indicated by \eqref{hwg}, this algebra is graded by the dimension vectors of representations of $Q$.  The multiplication is defined by taking push-forward and pull-back of cohomology in the usual correspondence diagram
\begin{equation}
\label{corr_diag}
\xymatrix{
\mathfrak{M}(Q)\times\mathfrak{M}(Q)&&\ar[ll]_-{\pi_1\times\pi_3}\mathfrak{E}\mathrm{xact}(Q)\ar[rr]^-{\pi_2}&&\mathfrak{M}(Q).
}
\end{equation}
In the diagram \eqref{corr_diag}, $\mathfrak{M}(Q)=\coprod_{\dd\in \dvs}\Mst_{\dd}(Q)$ is the stack of finite dimensional $Q$-representations, $\mathfrak{E}\mathrm{xact}(Q)$ is the stack of short exact sequences of $Q$-representations, and $\pi_i$ is the morphism taking a short exact sequence to its $i$th entry.
\smallbreak
If $Q$ is moreover symmetric (i.e. for every pair of vertices $i,j$, there are as many arrows from $i$ to $j$ as from $j$ to $i$), then the multiplication respects the cohomological degree, and a theorem of Efimov \cite{Efimov} states that (a slight modification of) $\Coha_Q$ is a free \textit{super}commutative algebra; the ``super'' here means that elements in odd cohomological degrees anti-commute with each other.  
\smallbreak
There are a couple of instances in which this result is possible to check by hand, indeed it was observed in \cite[Sec.2.5]{KS2} that if $Q^{(l)}$ denotes the quiver with one vertex and $l$ loops, then
\begin{align}
\Coha_{Q^{(0)}}\cong &\Sym\left((\mathbb{Q}[u])[-1]\right)\label{se}\\
\Coha_{Q^{(1)}}\cong &\Sym\left(\mathbb{Q}[u]\right)\label{sc}
\end{align}
where $[-1]$ denotes a cohomological shift, and $u^i$ is placed in cohomological degree $2i$.  Both algebras are free supercommutative algebras generated by a countable set of symbols $\alpha_i$ for $i\geq 0$, with $\alpha_i$ placed in cohomological degree $1+2i$ in the zero-loop case, and in cohomological degree $2i$ in the one-loop case.  In other words, \eqref{se} states that $\Coha_{Q^{(0)}}$ is a free exterior algebra with countably many generators, while \eqref{sc} states that $\Coha_{Q^{(1)}}$ is a free commutative algebra with the same generators.  As observed in \cite{KS2}, the fact that the underlying vector spaces of the two algebras are the same (since $\Mst_n(Q^{(l)})$ is homotopic to $\mathrm{BGl}_n$, regardless of $l$) can be seen as a consequence of the boson-fermion correspondence in representation theory.
\smallbreak
Now let $Q$ be an arbitrary finite quiver.  We form the tripled quiver $\tilde{Q}$, by adjoining to the quiver $Q$ an arrow $a^*$ with the opposite orientation to $a$, for each $a$ an arrow of $Q$, and also adjoining a loop $\omega_i$ at each vertex $i$ of $Q$.  So for instance $\widetilde{Q^{(1)}}\cong Q^{(3)}$.  The quiver $\tilde{Q}$ carries a canonical cubic potential 
\begin{equation}
\label{wt_def}
\tilde{W}=\sum_{a\in Q_1}[a,a^*]\sum_{i\in Q_0} \omega_i,
\end{equation}
and one may define (again as in \cite{KS2}) the critical cohomological Hall algebra $\Coha_{\tilde{Q},\tilde{W}}$.  Again, the multiplication respects cohomological degree.  The underlying vector space of this algebra is the vanishing cycle cohomology of the function $\Tr(\tilde{W})$ on the stack $\mathfrak{M}(\tilde{Q})$.  The multiplication is defined via pull-back and push-forward of vanishing cycle cohomology along the same correspondence diagram \eqref{corr_diag}.  Via dimensional reduction \cite{Da13} the algebra $\Coha_{\tilde{Q},\tilde{W}}$ is isomorphic \cite{RS17,YZ16} to the cohomological Hall algebra structure on the Borel--Moore homology of the stack of representations of the preprojective algebra $\Pi_Q$ constructed by Schiffmann and Vasserot in \cite{ScVa13} and studied in \cite{YZ18}.
\smallbreak
In general the algebra $\Coha_{\tilde{Q},\tilde{W}}$ is not supercommutative; for example $\Coha_{\tilde{Q},\tilde{W}}$ contains the universal enveloping algebra of the Kac--Moody Lie algebra associated to the quiver $Q'$ obtained by removing all vertices from $Q$ that support 1-cycles \cite{preproj3}.  On the other hand, via the cohomological integrality theorem \cite{QEAs}, the entire algebra $\Coha_{\tilde{Q},\tilde{W}}$ satisfies a Poincar\'e--Birkhoff--Witt theorem, meaning that we have an $\dvs$-graded isomorphism of cohomologically graded vector spaces (but \textit{not} of algebras):
\begin{align}
\label{syma}
\Coha_{\tilde{Q},\tilde{W}}\cong &\Sym\left(\bigoplus_{\dd\in\dvs}\left(\DT_{\tilde{Q},\tilde{W},\dd}\otimes \mathbb{Q}[u]\right)[-1]\right).
\end{align}
In the isomorphism \eqref{syma}, $\DT_{\tilde{Q},\tilde{W},\dd}$ is a cohomologically graded vector space, the whole of $\DT_{\tilde{Q},\tilde{W},\dd}\otimes \mathbb{Q}[u]$ is placed in $\dvs$-degree $\dd$, $u^i$ has cohomological degree $2i$, and the symbol $[-1]$ denotes the cohomological shift as before.  Moreover by \cite{preproj}, the Poincar\'e polynomial of the cohomologically graded vector space $\DT_{\tilde{Q},\tilde{W},\dd}$ satisfies the relation
\begin{equation}
\label{KactoDT}
\poin(\DT_{\tilde{Q},\tilde{W},\dd},q^{1/2})\coloneqq \sum_{i\in\mathbb{Z}}\dim(\DT_{\tilde{Q},\tilde{W},\dd}^i)q^{i/2}=q^{-1/2}\kac_{Q,\dd}(q^{-1})
\end{equation}
where $\kac_{Q,\dd}(q)$ is the \textit{Kac polynomial} \cite{Kac80}, counting absolutely indecomposable $\dd$-dimensional $Q$-representations over a field of order $q$.  For example, we observe that $Q^{(1)}\cong\widetilde{Q^{(0)}}$ and calculate 
\[
\kac_{Q^{(0)},\dd}(q)=\begin{cases} 1&\textrm{if }\dd=1\\0&\textrm{if }\dd\geq 2\end{cases}
\]
recovering \eqref{sc} from \eqref{syma} and \eqref{KactoDT}.
\smallbreak
In particular, we see that for arbitrary finite $Q$, the algebra $\Coha_{\tilde{Q},\tilde{W}}$ is \textit{bosonic}, in the sense that it is situated entirely in even cohomological degree.  Having observed that $\Coha_{Q^{(1)}}=\Coha_{\widetilde{Q^{(0)}},\tilde{W}}$ has a fermionic counterpart $\Coha_{Q^{(0)}}$, we may ask the following
\begin{question}
\label{bq}
For a finite quiver $Q$, is there a ``fermionic'' version of $\Coha_{\tilde{Q},\tilde{W}}$?
\end{question}
\smallbreak
It is a consequence of the \textit{purity} of the cohomological BPS invariants for the quiver with potential $(\tilde{Q},\tilde{W})$ (again, see \cite{preproj}) that there is an equality
\begin{equation}
\label{ppv}
\poin(\DT_{\tilde{Q},\tilde{W},\dd},q^{1/2})=\chi_{q^{1/2}}(\DT_{\tilde{Q},\tilde{W},\dd})\
\end{equation}
between the Poincar\'e polynomial and the virtual Poincar\'e polynomial\footnote{See \cite{Moz11} for the virtual Poincar\'e polynomial analogue of \eqref{KactoDT}; any two of purity, \eqref{KactoDT}, and the virtual analogue of \eqref{KactoDT} imply the third.} of $\DT_{\tilde{Q},\tilde{W},\dd}$.  The polynomials $\chi_{q^{1/2}}(\DT_{\tilde{Q},\tilde{W},\dd})$ are essentially by definition the \textit{refined} BPS invariants for the quiver $\tilde{Q}$ with potential $\tilde{W}$.  Expressed in the language of refined Donaldson--Thomas theory, the analogue of Question \ref{bq} is
\begin{question}
\label{bq2}
Is there quiver $Q'$ with potential $W'$ such that the refined BPS invariants satisfy
\[
\chi_{q^{1/2}}(\DT_{Q',W',\dd})=q^{1/2}\chi_{q^{1/2}}(\DT_{\tilde{Q},\tilde{W},\dd})?
\]
\end{question}
In this paper we will answer these two questions in the affirmative.
\subsection{Counting rational curves}
\label{cs_sec}
At least in the case in which $Q$ is an affine Dynkin quiver, there are strong hints from the McKay correspondence that the answer to Question \ref{bq2} should be at least a partial ``yes''.  We recall some geometric background; see \cite{KM92,BKL01} for more details.
\smallbreak
Fix a finite subgroup $G\subset \Sl_2(\mathbb{C})$, and denote by $X_0=\mathbb{C}^2/G$ the associated Kleinian singularity.  We denote by $\Gamma$ the McKay graph of $G$, and by $\Gamma'$ the full ADE type sub-graph of $\Gamma$ obtained by removing the vertex corresponding to the trivial representation.  We denote by $p\colon Y_0\rightarrow X_0$ the minimal resolution of $X_0$.  The surface $X_0$ contains an isolated singularity $x$, and the exceptional fibre $p^{-1}(x)$ consists of a chain of rational curves, with incidence graph $\Gamma'$.  The space $Y_0$ has a universal deformation $Y$ parametrised by $\mathfrak{h}$, the Cartan subalgebra of the simple Lie algebra corresponding to $\Gamma'$, so that we have a Cartesian diagram
\[
\xymatrix{
Y_0\ar[d]\ar@{^{(}->}[r]&Y\ar[d]^{\pi}\\
0\ar@{^{(}->}[r]&\mathfrak{h}.
}
\]
The generic fibre of $\pi$ contains no rational curves, while if $h\in\mathfrak{h}$ lies in a root hyperplane corresponding to a vertex $i$ of $\Gamma'$, the rational curve $C_i$ corresponding to $i$ deforms along the line $t\cdot h$ for $t\in\mathbb{C}$ \cite[Prop.2.2]{BKL01}.  We pick $h\in\mathfrak{h}$ and form the Cartesian diagram
\begin{equation}
\label{1pa}
\xymatrix{
Y^h\ar[d]^{\pi'}\ar[r]&Y\ar[d]^{\pi}\\
\mathbb{A}^1\ar[r]^-{t\mapsto t\cdot h}&\mathfrak{h}.
}
\end{equation}
\smallbreak
For $\beta\in \HO_2(Y^h,\mathbb{Z})$ and $n\in\mathbb{N}$, let $\mathcal{M}_{\beta,n}(Y^h)$ denote the moduli space of semistable coherent sheaves $\mathcal{F}$ on $Y'$, with fundamental class of the support of $\mathcal{F}$ equal to $\beta$, and with $\chi(\mathcal{F})=n$.  
\smallbreak 
Consider the case in which $\beta=[C_i]$ for some $i\in Q_0$, so that stability is equivalent to semistability.  Since a stable coherent sheaf $\mathcal{F}$ cannot split as a direct sum, it is supported on a single fibre of the morphism $\pi'$.  If $C_i$ deforms along $\mathbb{A}^1$, with curve over $t\in\mathbb{A}_1$ labelled $C_{i,t}$, then in each fibre $\pi'(t)$ there is a unique semistable coherent sheaf $\mathscr{O}_{C_{i,t}}(n-1)$ with Euler characteristic $n$ and $[\mathscr{O}_{C_{i,t}}(n-1)]=\beta$.  If $C_i$ does not deform, then there is a unique semistable coherent sheaf on the whole of $Y^h$ of class $(\beta,n)$, supported above $0\in \mathbb{A}^1$.  So
\[
\mathcal{M}_{[C_i],n}(Y^h)=\begin{cases} \pt &\textrm{if }h\notin i^{\perp}\\
\mathbb{A}^1&\textrm{if }h\in i^{\perp}.\end{cases}
\]
The \textit{cohomological BPS invariants} $\DT_{\beta,n}$ of the 3-fold $Y^h$ are in general hard to define rigorously, involving vanishing cycle cohomology, d-critical structures \cite{Jo15}, orientation data, perverse filtrations, etc.  However, for simple classes like the ones we are considering here, the definition/calculation boils down to something more straightforward:
\begin{align*}
\DT_{[C_i],n}(Y^h)=&\HO(\mathcal{M}_{[C_i],n}(Y^h),\mathbb{Q})_{\vir}\\
\cong&\begin{cases}\mathbb{Q}&\textrm{if }h\notin i^{\perp}\\
\mathbb{Q}[1]&\textrm{if }h\in i^{\perp}.
\end{cases}
\end{align*}
The subscript $\vir$ denotes the cohomological shift $[\dim(\mathcal{M}_{[C_i],n})]$ by the dimension of the space we are taking the singular cohomology of, and accounts for the shift in the $h\in i^{\perp}$ case.  So in both cases, the cohomological BPS invariants for simple curve classes are one-dimensional vector spaces, and are concentrated in even or odd cohomological degree, depending on whether the choice of $h$ means that $C_i$ is rigid or not.
\smallbreak
Similarly, the definition and calculation of the simplest degree zero cohomological BPS invariant $\DT_{0,1}(Y^h)$ is much easier than the general case, and we have
\[
\DT_{0,1}(Y^h)\cong \HO(Y^h,\mathbb{Q})_{\vir}.
\]
It is easy to verify that the singular cohomology of $Y^h$ does not depend on the choice of $h$ at all.  In general one expects $\DT_{0,n}\cong \DT_{0,1}$ (compare with \cite{BBS}), so it turns out that the degree zero\footnote{By which we mean the Donaldson--Thomas theory of coherent sheaves with zero-dimensional support.} cohomological DT theory of $Y^h$ does not depend on $h$.
\smallbreak
To put this geometric discussion in very leading language: for the most degenerate case $h=0$, all of the cohomological DT theory is bosonic, since the vanishing cycle cohomology ends up living in even cohomological degree (taking into account the shift $[-1]$ in \eqref{syma}).  On the other hand, modifying the deforming family defined by $h\in\mathfrak{h}$ to be more generic, a portion of the cohomological DT theory is fermionized, depending on which root hyperplanes $h$ avoids.
\subsection{The noncommutative conifold and central extensions of the preprojective algebra}
\label{ERC_sec}
Let $Q'$ be the $A_1$ quiver, and let $Q$ be its affine extension, which we label as follows:
\begin{equation}
\label{Qdef}
\xymatrix{
Q=&0\ar@/^1pc/[rr]^a&&1.\ar@/^1pc/[ll]^b
}
\end{equation}
Then as a special case of \cite{KV00} there is a derived equivalence between the category of finitely generated modules for the preprojective algebra $\Pi_Q$ of $Q$, and the category of coherent sheaves on $Y_0$, the minimal resolution of the type $A_1$ singularity defined by the equation $x^2+y^2=z^2$:
\[
\Dbfg(\Pi_Q\lMod)\xrightarrow{\cong}\Db(\Coh(Y_0)).
\]
We have that 
\[
\xymatrix{
\tilde{Q}=&&0\ar@(ul,dl)_{\omega_0}\ar@/^1.5pc/[rr]^{b^*}\ar@/^1pc/[rr]_a&&1\ar@/^1pc/[ll]_b\ar@/^1.5pc/[ll]^{a^*}\ar@(ur,dr)^{\omega_1}
}
\]

The Kac polynomials of $Q$ are possible to calculate by hand, so that we can calculate the cohomological BPS invariants for $\Jac(\tilde{Q},\tilde{W})$ via \eqref{KactoDT}
\begin{equation}
\label{h0DT}
\DT_{\tilde{Q},\tilde{W},(m,n)}\cong\begin{cases} \mathbb{Q}[3]\oplus\mathbb{Q}[1]&\textrm{if }m=n\\
\mathbb{Q}[1] &\textrm{if }m=n\pm 1\\
0&\textrm{otherwise.}
\end{cases}
\end{equation}

There is an isomorphism $\Jac(\tilde{Q},\tilde{W})\cong \Pi_Q[\omega]=\Pi_Q\otimes \mathbb{C}[\omega]$, so that we have in addition a derived equivalence (see e.g. \cite{Sz08})
\[
\Dbfg(\Jac(\tilde{Q},\tilde{W})\lMod)\xrightarrow{\cong}\Db(\Coh(Y^0))
\]
where $Y^0$ is defined via the construction in the previous subsection by setting $h=0\in\mathfrak{h}$, i.e. $Y^0=Y_0\times \mathbb{A}^1$.
\smallbreak
For the $A_1$-singularity, the Cartan sub-algebra $\mathfrak{h}$ is one-dimensional, so aside from $0$ there is an essentially unique choice of $h\in\mathfrak{h}$.  Defining $Y_{\con}=Y^h$ for a \textit{nonzero} choice of $h\in\mathfrak{h}$ in diagram \eqref{1pa}, we obtain the resolved conifold.  As noted in the previous section, instead of having an $\mathbb{A}^1$-family of rational curves giving rise to cohomological BPS invariants in odd degrees, the resolved conifold contains a unique rigid curve.  It follows that the cohomological BPS invariants corresponding to sheaves supported on the curve flip parity, and are supported in even cohomological degrees, so that they contribute to the fermionic part of the cohomological DT theory of the resolved conifold (as ever, taking into account the shift defined as in \eqref{syma})
\smallbreak
The resolved conifold also has a noncommutative model, studied in this context by Szendr\H{o}i, which we recall (see \cite{conifold} for details on the noncommutative Donaldson--Thomas theory of the conifold, and also \cite{RSYZ20,GY20,LY20} for more recent work on CoHAs related to toric 3-folds).  We consider the double
\[
\xymatrix{
\overline{Q}=&0\ar@/^1.5pc/[rr]^{b^*}\ar@/^1pc/[rr]_a&&1.\ar@/^1pc/[ll]_b\ar@/^1.5pc/[ll]^{a^*}
}
\]
Set $W_{\KW}=aa^*bb^*-a^*ab^*b$ to be the Klebanov--Witten potential \cite{KW98}.  Then (e.g. as a special case of a result due to Van den Bergh \cite{NonComCrep}) there is a derived equivalence
\[
\Dbfg(\Jac(\overline{Q},W_{\KW})\lMod)\xrightarrow{\cong}\Db(\Coh(Y_{\con})).
\]
To interpolate between the two cases ($h=0,h\neq 0$) it turns out to be more instructive to consider the quiver $\tilde {Q}$ with the potential 
\[
\tilde{W}^{(1,-1)}=\tilde{W}+\omega_0^2-\omega_1^2.
\]
As we recall in \S \ref{JAS} the resulting Jacobi algebra has already been studied: it is a special case of the central extensions of $\Pi_Q$ introduced by Etingof and Rains in \cite{ER05}.  There is an isomorphism (see Example \ref{ConEx})
\begin{equation}
\label{twoJac}
\Jac(\tilde{Q},\tilde{W}^{(1,-1)})\cong \Jac(\overline{Q},W_{\KW})
\end{equation}
and the cohomological DT theory of the Jacobi algebras in \eqref{twoJac} turns out to be the same.  The cohomological BPS invariants for the noncommutative conifold can be deduced from \cite{MMNS} and purity (proved as in \cite[Thm.4.7]{DOS20}):
\begin{equation}
\label{h1DT}
\DT_{\tilde{Q},\tilde{W}^{(1,-1)},m,n}\cong\begin{cases} \HO(Y_{\con},\mathbb{Q})[3]\cong \mathbb{Q}[3]\oplus \mathbb{Q}[1] &\textrm{if }m=n\\
\mathbb{Q}&\textrm{if }m=n\pm 1\\
0&\textrm{otherwise.}
\end{cases}
\end{equation}
Comparing \eqref{h0DT} with \eqref{h1DT} we see exactly the same pattern as in the commutative case of \S \ref{cs_sec}, with the passage from trivial $\mathbb{A}^1$-deformations of $Y_0$ to nontrivial ones replaced in the context of noncommutative algebraic geometry by the passage from the trivial central extension of the algebra $\Pi_Q$ to the nontrivial ones constructed by Etingof and Rains; this change provokes a flip in the parity of some, but not all, of the cohomological BPS invariants.

\subsection{Main results}
In the rest of the paper we prove that the above discussion regarding the (noncommutative) conifold forms part of a general procedure for (selectively) ``fermionizing'' the cohomological Hall algebras of preprojective algebras.  This culminates in the following theorem:
\begin{thmx}
\label{main_thm}
Let $\mu\in\mathbb{C}^{Q_0}$.  Set $\tilde{W}^{\mu}=\sum_{a\in Q_1}[a,a^*]\sum_{i\in Q_0}\omega_i+\frac{1}{2}\sum_{i\in Q_0} \mu_i\omega_i^2$.  Then the cohomological BPS invariants for the quiver $\tilde{Q}$ with potential $\tilde{W}^{\mu}$ satisfy
\begin{equation}
\label{refDTp}
\poin(\DT_{\tilde{Q},\tilde{W}^{\mu},\dd},q^{1/2})=\begin{cases}\kac_{Q,\dd}(q^{-1}) & \textrm{if } \dd\cdot \mu\neq 0\\
q^{-1/2}\kac_{Q,\dd}(q^{-1}) &\textrm{if }\dd\cdot \mu=0
\end{cases}
\end{equation}
where $\kac_{Q,\dd}(q)$ are the Kac polynomials for $Q$.  In addition, the natural mixed Hodge structure on the cohomological BPS invariants $\DT_{\tilde{Q},\tilde{W}^{\mu},\dd}$ is pure, of Tate type, so that we have isomorphisms of Hodge-theoretic BPS invariants
\begin{equation}
\label{cohDTa}
\DT^{\hdg}_{\tilde{Q},\tilde{W}^{\mu},\dd}\cong\begin{cases}\bigoplus_{i\in\mathbb{Z}}(\LLL^{i})^{\oplus \kac_{Q,\dd,-i}}&\textrm{if } \dd\cdot\mu\neq 0\\
\bigoplus_{i\in\mathbb{Z}}(\LLL^{i-1/2})^{\oplus \kac_{Q,\dd,-i}}&\textrm{if } \dd\cdot \mu= 0
\end{cases}
\end{equation}
where $\LLL\coloneqq \HO_c(\mathbb{A}^1,\mathbb{Q})$.  By purity, the virtual Poincar\'e polynomials of the cohomological BPS invariants agree with the above Poincar\'e polynomials, so that we have equalities for the refined BPS invariants
\begin{equation}
\label{refDTa}
\chi_{q^{1/2}}(\DT_{\tilde{Q},\tilde{W}^{\mu},\dd})=\begin{cases}\kac_{Q,\dd}(q^{-1}) & \textrm{if } \dd\cdot \mu\neq 0\\
q^{-1/2}\kac_{Q,\dd}(q^{-1}) &\textrm{if }\dd\cdot \mu=0.
\end{cases}
\end{equation}
\end{thmx}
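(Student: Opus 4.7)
The plan is to use \emph{deformed dimensional reduction}, a generalisation of the standard linear dimensional reduction of \cite{Da13} that handles the quadratic loop terms in $\tilde W^\mu$, to convert the vanishing cycle calculation on $\Mst(\tilde Q)$ into a Borel--Moore homology computation related to the Crawley-Boevey--Holland deformed preprojective algebras $\Pi_Q^\lambda$. The BPS invariants are then extracted via the cohomological integrality theorem of \cite{QEAs} combined with the computation of the relevant BM homology (Theorem B of this paper).

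Decompose $Q_0 = I_+ \sqcup I_0$ with $\mu_i \neq 0$ for $i\in I_+$ and $\mu_i = 0$ for $i\in I_0$. For each $i \in I_+$, the potential is genuinely quadratic in $\omega_i$, and completing the square via $\omega_i \mapsto \omega_i - \mu_i^{-1}\sum_a [a,a^*]|_i$ yields a Thom--Sebastiani splitting: a nondegenerate quadratic form in $\omega_i$, whose vanishing cycle cohomology is a shifted constant sheaf, plus a residual potential that encodes the deformed moment map relation $\sum_a[a,a^*]|_i = -\mu_i\omega_i$. For $i \in I_0$ the potential remains linear in $\omega_i$ and the classical dimensional reduction applies directly. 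Combining these over all vertices identifies the critical cohomology on $\Mst_\dd(\tilde Q)$ with an explicit cohomological shift of the BM homology of a stack whose geometry interpolates between representations of $\Pi_Q$ and of $\Pi_Q^\lambda$ (with $\lambda$ reading off the eigenvalues of the central endomorphisms $\omega_i$).

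Applying cohomological integrality to this BM homology realises it as $\Sym$ of a BPS graded piece tensored with $\QQ[u]$, thereby exhibiting $\DT_{\tilde Q, \tilde W^\mu, \dd}$, up to the dimensional reduction shift, as the $\dd$-graded piece of the BPS Lie algebra. The Poincar\'e polynomials are then computed via Theorem B, where point counts over finite fields together with a Hausel--Letellier--Rodriguez-Villegas--style argument relate $\Pi_Q^\lambda$-representation counts to the Kac polynomials of $Q$. When $\dd\cdot\mu = 0$, the calculation reduces to the undeformed case of \cite{preproj}, giving $q^{-1/2}\kac_{Q,\dd}(q^{-1})$; when $\dd\cdot\mu \neq 0$, the shift contributed by the quadratic loop directions interacts with the smoothness of the relevant Crawley-Boevey moduli \`a la Crawley-Boevey--Van den Bergh, flipping the parity and producing $\kac_{Q,\dd}(q^{-1})$. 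Tate purity is obtained via the $\Gm$-contraction argument of \cite[Thm.4.7]{DOS20}, which then forces the refined BPS invariants to coincide with the Poincar\'e polynomials, yielding \eqref{cohDTa} and \eqref{refDTa}.

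The main technical obstacle is the precise setup of the deformed dimensional reduction in the mixed case $I_+, I_0 \neq \emptyset$, and in tracking the resulting cohomological shifts through the BPS/integrality decomposition so that the ``fermionization'' (the loss of the $q^{-1/2}$ factor when $\dd\cdot\mu \neq 0$) is correctly attributed to the BPS graded piece rather than to the full symmetric algebra. A secondary subtlety is the stratification of the stack of $\Jac(\tilde Q,\tilde W^\mu)$-representations by the eigenvalues of the central endomorphisms $\omega_i$: the Crawley-Boevey consistency condition $\lambda\cdot\dd = 0$ restricts these eigenvalues to a hyperplane depending on $\dd$, and its interaction with the trace pairing $\dd\cdot\mu$ is what ultimately produces the dichotomy in \eqref{refDTp}.
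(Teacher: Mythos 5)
You have the right headline tool, but you apply it in the wrong direction, and the step that actually produces the dichotomy in \eqref{refDTp} is missing. The paper's proof does \emph{not} complete the square in the loops: it applies deformed dimensional reduction (Theorem \ref{DDR_theorem}) with the \emph{starred arrows} $\AA_{\dd}(Q^{\op})$ playing the role of the contracted directions $\AA^m$, so that the surviving residual potential is exactly $\Tr(L_{\mu})=\tfrac{1}{2}\sum_i\mu_i\Tr(\omega_i^2)$ on the stack $\mathfrak{I}(Q)$ of pairs consisting of a $\CC Q$-module and an endomorphism. Your substitution $\omega_i\mapsto\omega_i-\mu_i^{-1}r_i$ (with $r_i=e_i(\sum_a[a,a^*])e_i$) does split off a nondegenerate quadratic in $\omega_i$, but the residual potential then contains the terms $-\tfrac{1}{2\mu_i}\Tr(r_i^2)$, which are quadratic in the starred arrows (compare Example \ref{ConEx}, where this residue is precisely the quartic Klebanov--Witten potential); they are not linear in any coordinate system, so neither classical nor deformed dimensional reduction applies to them, and their critical locus is not a stack of modules over a deformed preprojective algebra. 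Your claim that the critical cohomology becomes the Borel--Moore homology of a stack interpolating between representations of $\Pi_Q$ and of $\Pi_{Q,\lambda}$ is therefore unjustified: $\Pi_{Q,\mu}$ arises from the \emph{linear} deformation $\tilde{W}+\mu\omega$, which is Theorem \ref{defBM} --- and in the paper that theorem is a corollary of the same machinery via \eqref{general_n}, not an input to Theorem \ref{main_thm}, so your logical dependence runs backwards; no finite-field point count is used anywhere.

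The missing idea is Lemma \ref{support_theorem}: the BPS sheaf satisfies $\DTS_{\tilde{Q},\tilde{W},\dd}\cong\tilde{e}_!\bigl(\DTS_{\Pi_Q,\dd}\boxtimes\ICS_{\AA^1}\bigr)$, i.e.\ it is supported where all the $\omega_i$ act by a single scalar $t$ and is pulled back along the $t$-line. After the first reduction one applies the integrality theorem (Theorem \ref{IT1}) to the \emph{undeformed} pair $(\tilde{Q},\tilde{W})$, and by Thom--Sebastiani one only has to evaluate $\phin{\Tr(L_{\mu})}$ on each BPS summand; by the support lemma this is the vanishing cycle of the one-variable function $t\mapsto\tfrac{1}{2}(\mu\cdot\dd)t^2$, which gives $\mathbb{Q}_0$ or $\ICS_{\AA^1}$ according to whether $\mu\cdot\dd\neq 0$ or $\mu\cdot\dd=0$. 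This single computation is the entire source of the parity flip; without it your remark about stratifying by eigenvalues has no force, since a priori $\Tr(L_{\mu})$ depends on all the eigenvalues of all the $\omega_i$ separately. Finally, purity is not obtained by a $\Gm$-contraction as in \cite{DOS20}: it is inherited from the purity of $\DT^{\hdg}_{\tilde{Q},\tilde{W},\dd}$ (Theorem \ref{purity_thm}) through the integrality isomorphism and the semisimplicity of the category of pure monodromic mixed Hodge structures, and the Kac polynomials enter only through Theorem \ref{DTKac}, quoted from earlier work.
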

Comparing with \eqref{KactoDT} we see that for dimension vectors $\dd$ satisfying $\dd\cdot \mu\neq 0$ the cohomological BPS invariants have switched parity.  In particular, for generic choices of $\mu$, the algebra $\Coha_{\tilde{Q},\tilde{W}^{\mu}}$ is a fermionized version of $\Coha_{\tilde{Q},\tilde{W}}$, so \eqref{cohDTa} and \eqref{refDTa} answer Questions \ref{bq} and \eqref{bq2} respectively in the affirmative.
\smallbreak
Still fixing $\mu\in \mathbb{C}^{Q_0}$ the \textit{deformed preprojective algebra}, introduced by Crawley--Boevey and Holland in \cite{CBH98}, is defined by
\[
\Pi_{Q,\mu}\coloneqq\mathbb{C}\overline{Q}/\langle \sum_{a\in Q_1}[a,a^*]+\sum_{i\in Q_0} \mu_i e_i\rangle.
\]
where $e_i$ is the path of length zero beginning and ending at the vertex $i$.  Via the methods used to prove Theorem \ref{main_thm} we are able to calculate the Borel--Moore homology (along with its mixed Hodge structure) of all stacks of representations of deformed preprojective algebras, simultaneously generalising a result of Crawley--Boevey and Van den Bergh \cite{CBVdB04} from the case of generic $\mu$ and indivisible dimension vector $\dd$, and the result from \cite{preproj} which deals with the case $\mu=0$ and arbitrary $\dd$:
\begin{thmx}
\label{defBM}
For arbitrary $\mu\in R$ and $\dd\in\mathbb{N}^{Q_0}$ there is an isomorphism of $\mathbb{N}^{Q_0}$-graded mixed Hodge structures
\begin{equation}
\label{VB}
\bigoplus_{\dd\in \mathbb{N}^{Q_0}}\HO_c\!\left(\Mst_{\dd}(\Pi_{Q,\mu}),\mathbb{Q}\right)\otimes \LLL^{\chi_Q(\dd,\dd)}\cong \Sym\left(\bigoplus_{\substack{0\neq \dd\in\mathbb{N}^{Q_0}\\ \dd\cdot\mu=0}}\DT_{\tilde{Q},\tilde{W},\dd}^{\hdg,\vee}\otimes\HO_c(\pt/\mathbb{C}^*,\mathbb{Q})_{\vir}\right)
\end{equation}
where 
\[
\DT_{\tilde{Q},\tilde{W},\dd}^{\hdg,\vee}\cong\bigoplus_{i\in\mathbb{Z}}(\LLL^{i+1/2})^{\oplus \kac_{Q,\dd,i}}
\]
and
\[
\HO_c(\pt/\CC^*)_{\vir}=\bigoplus_{i\geq 0}\LLL^{-1/2-i}.
\]
In particular, the compactly supported cohomology of $\Mst(\Pi_{Q,\mu})$, the stack of representations of the deformed preprojective algebra, is pure, of Tate type.
\end{thmx}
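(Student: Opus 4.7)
The plan is to apply the same techniques developed to prove Theorem \ref{main_thm} --- namely, deformed dimensional reduction combined with cohomological integrality --- now with the goal of computing $\HO_c(\Mst(\Pi_{Q,\mu}))$ rather than the BPS invariants of $(\tilde{Q},\tilde{W}^{\mu})$. Concretely, the argument splits into three stages: (i) establish a dimensional reduction isomorphism identifying $\HO_c(\Mst_{\dd}(\Pi_{Q,\mu}),\mathbb{Q})\otimes \LLL^{\chi_Q(\dd,\dd)}$ with a suitable piece of vanishing cycle cohomology of $\Tr(\tilde{W}^{\mu})$ on $\Mst_{\dd}(\tilde{Q})$; (ii) apply the cohomological integrality theorem \cite{QEAs} to rewrite this as a free symmetric algebra in the BPS invariants $\bigoplus_{\dd\neq 0}\DT_{\tilde{Q},\tilde{W}^{\mu},\dd}^{\hdg,\vee}\otimes\HO_c(\pt/\CC^*,\mathbb{Q})_{\vir}$; (iii) apply Theorem \ref{main_thm} to express the result in the form appearing on the RHS of \eqref{VB}.

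The key novelty is the dimensional reduction of step (i). Ordinary dimensional reduction requires the potential to be linear in a distinguished set of variables (here the loops $\omega_i$), but the quadratic correction $\tfrac{1}{2}\sum_i \mu_i\omega_i^2$ in $\tilde{W}^{\mu}$ obstructs its direct application. At vertices $i$ with $\mu_i\neq 0$, I would complete the square, writing
\[
\omega_i\cdot m_i + \tfrac{1}{2}\mu_i \omega_i^2 = \tfrac{1}{2}\mu_i(\omega_i + \mu_i^{-1}m_i)^2 - \tfrac{1}{2}\mu_i^{-1}m_i^2,
\]
where $m_i = \sum_{a\in Q_1}[a,a^*]_i$ is the $i$-th component of the moment map. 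After the linear change of variables $\omega_i \mapsto \omega_i - \mu_i^{-1}m_i$, the non-degenerate quadratic summand $\tfrac{1}{2}\mu_i \omega_i^2$ contributes only a Tate twist via Thom--Sebastiani for vanishing cycles, so the problem reduces to analysis of the residual ``moment-map-squared'' potential on $\Mst_{\dd}(\overline{Q})$; at vertices with $\mu_i = 0$, ordinary dimensional reduction applies verbatim. A careful analysis of the resulting residual vanishing cycle problem identifies the answer with $\HO_c(\Mst_{\dd}(\Pi_{Q,\mu}))$ up to Tate twist and cohomological shift, with the trace relation $\dd\cdot\mu = 0$ emerging naturally as the constraint for non-triviality.

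With step (i) in hand, steps (ii) and (iii) are relatively direct. For $\dd$ with $\dd\cdot\mu = 0$, Theorem \ref{main_thm} identifies $\DT^{\hdg}_{\tilde{Q},\tilde{W}^{\mu},\dd}$ with $\DT^{\hdg}_{\tilde{Q},\tilde{W},\dd}$, yielding exactly the undeformed BPS invariants appearing on the RHS of \eqref{VB}; for $\dd\cdot\mu\neq 0$, the stack $\Mst_{\dd}(\Pi_{Q,\mu})$ is empty (by tracing the defining relation of $\Pi_{Q,\mu}$), so the corresponding summands on the LHS vanish automatically, matching the restricted summation on the right. Purity and Tate-type of the compactly supported cohomology then follow from the purity statement of Theorem \ref{main_thm} together with the purity of $\HO_c(\pt/\CC^*,\mathbb{Q})_{\vir}$. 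I expect the principal obstacle to lie in step (i): besides the completion-of-square computation itself, the reduction must be carried out equivariantly for the gauge group action (so that it descends to the moduli stack), compatibly with mixed Hodge structures, and uniformly across the mixed situation in which some $\mu_i$ vanish and others do not. Matching the precise Tate twists and cohomological shifts, and verifying compatibility with the $\mathbb{N}^{Q_0}$-grading and the cohomological Hall algebra structure, will occupy the bulk of the technical work.
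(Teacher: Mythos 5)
Your three-step architecture (dimensional reduction, then cohomological integrality, then identification of the BPS invariants) is exactly the paper's, and your observation that $\Mst_{\dd}(\Pi_{Q,\mu})$ is empty when $\mu\cdot\dd\neq 0$ is correct. But step (i) has a genuine gap: you are working with the wrong potential. The quadratic deformation $\tilde{W}^{\mu}=\tilde{W}+\tfrac{1}{2}\sum_i\mu_i\omega_i^2$ is the potential whose Jacobi algebra is the Etingof--Rains \emph{central extension} $\Pi_Q^{\mu}$ (Proposition \ref{gKW}); the deformed preprojective algebra is the further quotient $\Pi_{Q,\mu}\cong\Pi_Q^{\mu}/\langle\omega-1\rangle$, and no dimensional-reduction statement relates $\phin{\Tr(\tilde{W}^{\mu})}$ to $\HO_c(\Mst(\Pi_{Q,\mu}),\mathbb{Q})$. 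Concretely, your completion of the square turns $\Tr(\tilde{W}^{\mu})$ into a nondegenerate quadratic form in the $\omega_i$ (contributing only a Tate twist) plus the ``moment-map-squared'' function $-\tfrac{1}{2}\sum_i\mu_i^{-1}\Tr(m_i^2)$; the Jacobi relations of the latter say that the moment map is a \emph{module endomorphism} (eliminate $\omega_i=-\mu_i^{-1}m_i$ from the centrality relations of $\Pi_Q^{\mu}$), not that it equals $-\mu$, so its critical locus and vanishing-cycle cohomology again see $\Pi_Q^{\mu}$ rather than $\Pi_{Q,\mu}$. A second symptom of the same problem appears in your step (iii): by Theorem \ref{main_thm} the invariants $\DT^{\hdg}_{\tilde{Q},\tilde{W}^{\mu},\dd}$ with $\mu\cdot\dd\neq 0$ are \emph{nonzero} (they are parity-shifted, not killed), so the $\Sym$ produced by integrality from $\Tr(\tilde{W}^{\mu})$ necessarily contains contributions from every $\dd$ supporting a simple module, which cannot match the right-hand side of \eqref{VB}, where only $\dd$ with $\mu\cdot\dd=0$ appear.

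The fix, which is what the paper does, is to deform $\tilde{W}$ \emph{linearly} rather than quadratically: set $\tilde{W}^{\mu}_1=\tilde{W}+\mu\omega$. This potential is linear in the loops, so ordinary (undeformed) dimensional reduction applies verbatim, and the common zero locus of the coefficient functions of the $\omega_i$ is exactly $\{\sum_{a\in Q_1}[a,a^*]+\sum_i\mu_ie_i=0\}$, i.e. $\Mst(\Pi_{Q,\mu})$ (Proposition \ref{dppdr}). The whole family $\tilde{W}^{\mu}_n=\tilde{W}+\tfrac{1}{n}\mu\omega^n$ is then handled uniformly by the deformed-dimensional-reduction argument from the proof of Theorem \ref{main_thm}, yielding $\BPS^{\hdg}_{\tilde{Q},\tilde{W}^{\mu}_n,\dd}\cong \BPS^{\hdg}_{\tilde{Q},\tilde{W},\dd}\otimes\LLL^{1/2}\otimes\HO(\mathbb{A}^1,\phim{(\mu\cdot\dd)x^n}\ICS_{\mathbb{A}^1})$. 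For $n=1$ the last factor vanishes precisely when $\mu\cdot\dd\neq 0$ (vanishing cycles of a nonzero linear function), which is what produces the restricted sum in \eqref{VB}; for $n=2$ it never vanishes and only records a half Tate twist, which is Theorem \ref{main_thm}. Purity then follows as you say, from Theorem \ref{purity_thm} and semisimplicity of pure objects.
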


See \S \ref{bonus_sec} for the proof of Theorems \ref{main_thm} and \ref{defBM}.
\subsection{Acknowledgments}
The suggestion that partial fermionization of BPS Lie algebras should be modelled by the passage from $\Tot(\mathcal{O}_{\mathbb{P}^1}(-2)\oplus \mathcal{O}_{\mathbb{P}^1})$ to $\Tot(\mathcal{O}_{\mathbb{P}^1}(-1)\oplus\mathcal{O}_{\mathbb{P}^1}(-1))$ is based on conversations with, and unpublished work of Kevin Costello.  I am very grateful to him for being so generous with his time and ideas.  I learnt about connections between the various Jacobi algebras appearing in this paper from Victor Ginzburg, and I am similarly grateful to him.  Thanks also to Hans Franzen, Tudor P\u{a}durariu and Markus Reineke for stimulating conversations about DT theory.  My thanks also go to Tommaso Scognamiglio for questions that led to the addition of \S \ref{bonus_sec} to an earlier draft.
\smallbreak
This paper was written for the British Mathematical Colloquium, held (virtually) in Glasgow in 2021.  Many thanks are due to the organisers for managing to run such an engaging online event.  During the writing of the paper, I was supported by the starter grant ``Categorified Donaldson--Thomas theory'' No. 759967 of the European Research Council, and also supported by a Royal Society university research fellowship.
\section{Cohomological DT theory for quivers with potential}
\subsection{Some algebras from quivers}
\label{JAS}
A quiver is determined by a set of vertices $Q_0$, a set of edges $Q_1$, and two morphisms $s,t\colon Q_1\rightarrow Q_0$ taking an arrow to its source and target respectively.  We always assume that $Q_0$ and $Q_1$ are finite.  We define the Euler form
\begin{align}\label{chi_def}
\chi_Q(\cdot,\cdot)\colon &\mathbb{Z}^{Q_0}\times\mathbb{Z}^{Q_0}\rightarrow\mathbb{Z}\\
\nonumber
&(\dd,\ee)\mapsto \sum_{a\in Q_1} \dd_{s(a)}\dd_{t(a)}-\sum_{i\in Q_0}\dd_i\ee_i.
\end{align}
Where there is no possibility of confusion, we drop the quiver $Q$ from the notation and just write $\chi(\cdot,\cdot)$.  We denote by $\overline{Q}$ the doubled quiver of $Q$, obtained by adding an arrow $a^*$ for every arrow $a\in Q_1$, where $a^*$ has the opposite orientation to $a$.  We denote by $\tilde{Q}$ the tripled quiver, obtained from $\overline{Q}$ by adding a loop $\omega_i$ at each vertex $i\in Q_0$.

\smallbreak
Given a ring $A$ and a quiver $Q$ we denote by $A Q$ the free path algebra of $Q$ with coefficients in $A$.  We denote by $R\subset \mathbb{C}Q$ the semisimple subalgebra spanned by length zero paths, so we may identify $R= \CC^{Q_0}$.  We denote by $\Pi_Q$ the preprojective algebra for $Q$, defined to be the quotient of the free path algebra $\mathbb{C}\overline{Q}$ by the two-sided ideal generated by the element $\sum_{a\in Q_1}[a,a^*]$.  As in the introduction, we denote by $\Pi_Q[\omega]$ the trivial extension obtained by adjoining a central element $\omega$ to the algebra $\Pi_Q$, i.e. $\Pi_Q[\omega]=\Pi_Q\otimes \CC[\omega]$.  
\smallbreak
Let $\mu\in R$.  We recall the central extension of $\Pi_Q$ introduced by Etingof and Rains \cite{ER05}:
\[
\Pi_Q^{\mu}=\mathbb{C}[\omega]\overline{Q}/\langle \sum_{a\in Q_1}[a,a^*]+\mu \omega\rangle.
\]
There is an obvious isomorphism 
\[
\Pi_Q^0\cong\Pi_Q[\omega]
\]
and natural isomorphisms
\begin{align*}
\Pi_{Q,\mu}\cong &\Pi_Q^{\mu}/\langle \omega-1\rangle\\
\Pi_{Q}\cong &\Pi_Q^{\mu}/\langle \omega\rangle,
\end{align*}
where $\Pi_{Q,\mu}$ is the deformed preprojective algebra recalled in the introduction.  The algebra $\Pi_Q^{\mu}$ provides an $\mathbb{A}^1$-family of algebras interpolating between the preprojective algebra $\Pi_Q$ and the deformed preprojective algebra $\Pi_{Q,\mu}$.
\smallbreak
Let $Q$ be a quiver and let $W\in\mathbb{C}Q/[\mathbb{C} Q,\mathbb{C} Q]$ be a potential, i.e. a linear combination of cyclic words, where cyclic words are considered to be equivalent if they can be cyclically permuted to each other.  We will call the data of a quiver with potential $(Q,W)$ a QP.  Given $a\in Q_1$, if $W=a_1\ldots a_n$ is a single cyclic word we define
\[
\partial W/\partial a=\sum_{a_m=a}a_{m+1}a_{m+2}\ldots a_na_1\ldots a_{m-1}
\]
and define $\partial W/\partial a$ for general $W$ by extending linearly.  We define
\[
\Jac(Q,W)=\mathbb{C} Q/\langle \partial W/\partial a \;\lvert\; a\in Q_1\rangle.
\]
In this paper we will study Jacobi algebras obtained from the tripled QP $(\tilde{Q},\tilde{W})$ defined in \eqref{wt_def} by adding polynomials in the extra loops $\omega_i$; we refer the reader to \cite[Sec.4]{ginz} for general background on this construction, \cite{Sz08,Vel10} for the noncommutative geometry background in type ADE, and \cite{CKV} for the physics perspective.
\begin{proposition}\cite[Ex.4.3.5]{ginz}
\label{gKW}
Let $\mu=\sum_i \mu_ie_i$.  Set $\tilde{W}^{\mu}=\tilde{W}+\frac{1}{2}\sum_{i\in Q_0} \mu_i\omega_i^2$.  Then there is an isomorphism
\[
\Pi_Q^{\mu}\cong \Jac(\tilde{Q},\tilde{W}^{\mu}).
\]
In particular there is a natural isomorphism
\[
\Pi_Q[\omega]\cong \Jac(\tilde{Q},\tilde{W}).
\]
\end{proposition}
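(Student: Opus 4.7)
The plan is to construct explicit mutually inverse homomorphisms between $\Jac(\tilde{Q}, \tilde{W}^{\mu})$ and $\Pi_Q^{\mu}$. The key dictionary is that the loop $\omega_i$ at vertex $i\in Q_0$ of $\tilde{Q}$ corresponds to the idempotent slice $e_i\omega$ of the central element $\omega$ in $\Pi_Q^{\mu}$; equivalently, $\omega = \sum_{i\in Q_0}\omega_i$ inside $\Jac(\tilde{Q}, \tilde{W}^{\mu})$.

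The first task is to compute the three families of cyclic derivatives of $\tilde{W}^{\mu}$. Because $aa^*$ and $a^*a$ are loops at the two endpoints of $a$, the loop $\omega_i$ pairs nontrivially with precisely those terms of $[a,a^*]$ based at vertex $i$, so the $\omega_i$-derivative picks out the $i$-th idempotent slice of the preprojective relation:
\[
\partial\tilde{W}^{\mu}/\partial\omega_i = e_i\Bigl(\sum_{a\in Q_1}[a,a^*]\Bigr)e_i + \mu_i\omega_i.
\]
For each arrow $a\in Q_1$, the derivatives $\partial\tilde{W}^{\mu}/\partial a$ and $\partial\tilde{W}^{\mu}/\partial a^*$ are (up to sign and ordering convention) the commutators of $a^*$ and $a$ with the loops $\omega_{s(a)}$ and $\omega_{t(a)}$, and so jointly express the statement that the element $\sum_i\omega_i$ commutes with every arrow of $\overline{Q}$.

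Next I would define a homomorphism $\phi\colon\Jac(\tilde{Q}, \tilde{W}^{\mu})\to \Pi_Q^{\mu}$ sending $a\mapsto a$, $a^*\mapsto a^*$, and $\omega_i\mapsto e_i\omega$, and verify that it descends to the Jacobi algebra. Under $\phi$ the $\omega_i$-relation becomes $e_i(\sum_a[a,a^*] + \mu\omega)e_i = 0$, which holds in $\Pi_Q^{\mu}$ because the whole bracketed expression vanishes there. The arrow relations become the assertion that $\omega$, being central, commutes with $a$ and $a^*$, which is automatic. Conversely I would define $\psi\colon \Pi_Q^{\mu}\to\Jac(\tilde{Q}, \tilde{W}^{\mu})$ by $a\mapsto a$, $a^*\mapsto a^*$, $\omega\mapsto\sum_i\omega_i$. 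Centrality of $\sum_i\omega_i$ with respect to the arrows of $\overline{Q}$ is precisely the content of the arrow-relations in the Jacobi algebra, so $\psi$ extends across the polynomial variable $\omega$. The defining relation $\sum_a[a,a^*] + \mu\omega$ of $\Pi_Q^{\mu}$ maps under $\psi$ to $\sum_i\partial\tilde{W}^{\mu}/\partial\omega_i$, which vanishes in the Jacobi algebra.

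Finally, $\phi$ and $\psi$ are manifestly mutually inverse on generators, giving the claimed isomorphism; the special case $\mu=0$ yields $\Pi_Q[\omega]\cong\Jac(\tilde{Q}, \tilde{W})$. The main potential pitfall is bookkeeping with path-composition and cyclic-word conventions when writing the cyclic derivatives, but once the dictionary $\omega_i\leftrightarrow e_i\omega$ is fixed the verification is essentially formal.
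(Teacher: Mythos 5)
Your proof is correct and follows the same route as the paper: the paper's proof is a two-sentence version of exactly this computation, noting that the cyclic derivatives with respect to $a$ and $a^*$ force $\omega=\sum_i\omega_i$ to commute with the doubled arrows, while the derivatives with respect to the $\omega_i$ give the idempotent slices of the defining relation of $\Pi_Q^{\mu}$. Your explicit inverse homomorphisms and the dictionary $\omega_i\leftrightarrow e_i\omega$ are just a fuller write-up of what the paper leaves to the reader.
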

\begin{proof}
This follows more or less from the definitions.  The noncommutative derivatives of $\tilde{W}^{\mu}$ with respect to the arrows $a$ impose the relation that $\omega$ commutes with the arrows $a^*$, and vice versa, while the noncommutative derivatives with respect to the loops $\omega_i$ impose the defining relations of $\Pi^{\mu}_Q$ as a quotient of $\mathbb{C}[\omega]\overline{Q}$.
\end{proof}
\begin{example}
\label{ConEx}
Let $Q$ be defined as in \eqref{Qdef}, and set $\mu=e_0-e_1$.  Then
\[
\tilde{W}^{\mu}=\omega_0(a^*a-bb^*)-\omega_1(aa^*-b^*b)+\frac{1}{2}(\omega_0^2-\omega_1^2).
\]
After the noncommutative change of variables
\begin{align*}
\omega_0&\mapsto \omega_0-a^*a+bb^*\\
\omega_1&\mapsto \omega_1-aa^*+b^*b
\end{align*}
the potential transforms to
\[
W=\frac{1}{2}(\omega_0^2-\omega_1^2)+b^*baa^*-bb^*a^*a
\]
Now the relations $\partial W/\partial \omega_i=\pm\omega_i$ mean that in the Jacobi algebra we may simply remove the loops $\omega_i$.  Thus there is a natural isomorphism
\[
\Jac(\tilde{Q},W)\cong\Jac(Q_{\con},W_{\KW}),
\]
giving the isomorphism \eqref{twoJac}.  In particular, the noncommutative conifold is isomorphic to one of the central extensions of $\Pi_Q$ considered above, for $Q$ as in \eqref{Qdef}.
\end{example}
Later we will use the following elementary result.
\begin{proposition}
\label{sspp}
Let $\rho$ be a $\dd$-dimensional simple $\Jac(\tilde{Q},\tilde{W}^{\mu})$-module.  Then the operator $\rho(\omega)$ acts on the underlying vector space of $\rho$ by multiplication by some scalar $\lambda\in \CC$, and if $\mu\cdot\dd\neq 0$ then $\lambda=0$.
\end{proposition}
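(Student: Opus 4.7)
The plan is to combine centrality of a canonical element with Schur's lemma, and then pin down the scalar by taking a trace. The first step is to identify the element $\omega\coloneqq\sum_{i\in Q_0}\omega_i\in\Jac(\tilde{Q},\tilde{W}^{\mu})$ with the central indeterminate $\omega$ of $\Pi_Q^{\mu}=\mathbb{C}[\omega]\overline{Q}/\langle\sum_{a\in Q_1}[a,a^*]+\mu\omega\rangle$ via the isomorphism of Proposition \ref{gKW}. This identification, or equivalently a direct check using the Jacobi relations $\partial\tilde{W}^{\mu}/\partial a=0$ and $\partial\tilde{W}^{\mu}/\partial a^*=0$ which force $\omega_{t(a)}a=a\omega_{s(a)}$ for every arrow $a$ of $\overline{Q}$, shows that $\omega$ is central in $\Jac(\tilde{Q},\tilde{W}^{\mu})$.

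Once centrality is established, the first assertion follows immediately from Schur's lemma applied to the finite-dimensional simple module $\rho$: there is a scalar $\lambda\in\mathbb{C}$ such that $\rho(\omega)=\lambda\cdot\mathrm{id}$. Since $\omega_i=e_i\omega$ in $\Pi_Q^{\mu}$, this also gives $\rho(\omega_i)=\lambda\,\rho(e_i)$ for each vertex $i$, so the notation $\rho(\omega)$ in the statement is unambiguous even though there is one loop $\omega_i$ per vertex.

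For the second assertion I would take the trace on both sides of the defining relation $\sum_{a\in Q_1}[a,a^*]+\mu\omega=0$ acting on the underlying vector space of $\rho$. Each commutator term contributes $\Tr([\rho(a),\rho(a^*)])=0$, so $\Tr(\rho(\mu\omega))=0$. But $\rho(\mu\omega)=\sum_i\mu_i\rho(\omega_i)=\lambda\sum_i\mu_i\rho(e_i)$ and $\Tr(\rho(e_i))=\dd_i$, so the equation collapses to $\lambda\,(\mu\cdot\dd)=0$. Hence if $\mu\cdot\dd\neq 0$ then $\lambda=0$, as required.

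The only real care needed is in the bookkeeping that translates between the single central $\omega$ appearing in the Etingof--Rains presentation and the collection of loops $\{\omega_i\}_{i\in Q_0}$ in the tripled-quiver presentation; once that is straight the argument is entirely formal, combining Schur's lemma with the vanishing of traces of commutators, and I do not anticipate a genuine obstacle.
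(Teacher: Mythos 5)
Your proof is correct and follows essentially the same route as the paper: centrality of $\omega=\sum_{i\in Q_0}\omega_i$ plus Schur's lemma for the first claim, then taking the trace of the moment-map relation (which the paper writes as $\sum_{i\in Q_0}\partial\tilde{W}^{\mu}/\partial\omega_i=\sum_{i\in Q_0}\mu_i\omega_i+\sum_{a\in Q_1}[a,a^*]=0$, identical to your Etingof--Rains relation) to conclude $\lambda\,(\mu\cdot\dd)=0$. No issues.
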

\begin{proof}
The first part follows from the fact that $\omega=\sum_{i\in Q_0}\omega_i$ is central in $\Jac(\tilde{Q},\tilde{W}^{\mu})$, so each eigenspace of $\rho(\omega)$ is preserved by the action of $\Jac(\tilde{Q},\tilde{W}^{\mu})$.  For the second part, consider the relation in $\Jac(\tilde{Q},\tilde{W}^{\mu})$
\[
0=\sum_{i\in Q_0}\partial \tilde{W}^{\mu}/\partial \omega_i=\sum_{i\in Q_0}\mu_i \omega_i+ \sum_{a\in Q_1}[a,a^*].
\]
Applying $\rho$ and taking the trace, the final sum vanishes, and we find
\begin{align*}
0=&\sum_{i\in Q_0} \mu_i \lambda\Tr(\Id_{\dd_i\times\dd_i})\\
=&\lambda \mu\cdot \dd
\end{align*}
as required.
\end{proof}
\subsection{Moduli spaces of quiver representations}
Given a quiver $Q$ and a dimension vector $\dd\in\dvs$ we set
\begin{align*}
\mathbb{A}_{\dd}(Q)\coloneqq &\prod_{a\in Q_1}\Hom(\CC^{\dd_{s(a)}},\CC^{\dd_{t(a)}})\\
\Gl_{\dd}\coloneqq & \prod_{i\in Q_0}\Gl_{\dd_i}(\CC).
\end{align*}
The group $\Gl_{\dd}$ acts on $\mathbb{A}_{\dd}(Q)$ via change of basis.  We denote by $\Mst_{\dd}(Q)$ the stack of $\dd$-dimensional $\CC Q$-modules.  There is an isomorphism of stacks
\[
\Mst_{\dd}(Q)\cong \mathbb{A}_{\dd}(Q)/\Gl_{\dd}.
\]
We denote by $\Msp_{\dd}(Q)$ the coarse moduli space of $\dd$-dimensional $\CC Q$-modules.  Geometric $K$-points of $\Msp_{\dd}(Q)$ are in natural bijection with semisimple $KQ$-modules.  There is an isomorphism
\[
\Msp_{\dd}(Q)\cong \Spec(\Gamma(\mathbb{A}_{\dd}(Q))^{\Gl_{\dd}}).
\]
We denote by $\JH_{\dd}\colon \Mst_{\dd}(Q)\rightarrow \Msp_{\dd}(Q)$ the affinization morphism.  Although this morphism is not projective, it is \textit{approximated by projective maps} in the sense of \cite{QEAs}, meaning that $\JH_*$ and $\JH_!$ commute with vanishing cycle functors (introduced in the next section).
\smallbreak
For spaces and morphisms involving a subscript $\dd$, if we omit the subscript, the union over all dimension vectors is intended.
\smallbreak
There is a finite morphism \cite{Meinhardt14}
\[
\oplus\colon \Msp(Q)\times\Msp(Q)\rightarrow \Msp(Q)
\]
which at the level of geometric points, takes a pair of $KQ$-modules to their direct sum.  Since this morphism is invariant under swapping the two factors of $\Msp(Q)$ in the domain, and finite morphisms are exact with respect to the perverse t structure, we obtain an induced symmetric monoidal product on $\Perv(\Msp(Q))$, defined by
\[
\mathcal{F}'\boxtimes_{\oplus}\mathcal{F}''\coloneqq \oplus_*\!\left(\mathcal{F}'\boxtimes\mathcal{F}''\right).
\]
\smallbreak
Given a potential $W\in \mathbb{C}Q/[\mathbb{C} Q,\mathbb{C} Q]$ we form the function $\Tr(W)_{\dd}$ on $\mathbb{A}_{\dd}(Q)$.  This is well defined and $\Gl_{\dd}$-invariant by cyclic invariance of trace.  As such, $\Tr(W)_{\dd}$ induces functions on $\Mst_{\dd}(Q)$ and $\Msp_{\dd}(Q)$, which we continue to denote by $\Tr(W)_{\dd}$, or just $\Tr(W)$ if there is no risk of ambiguity.

\subsection{Cohomological Donaldson--Thomas theory of quivers with potential}

Given a function $f$ on a smooth complex variety $X$ we define $X_0=f^{-1}(0)$, and consider the diagram
\[
\xymatrix{
& \tilde{X}\ar[d]^p\ar[r]& \mathbb{A}^1\ar[d]^{\exp}\\
X_0\ar@{^{(}->}[r]^{\kappa}& X\ar[r] &\mathbb{A}^1.
}
\]
in which the square is Cartesian.  Then we define the nearby cycles functor $\psi_f\colon \Db(\Perv(X))\rightarrow \Db(\Perv(X))$ via
\[
\psi_f=\kappa_*\kappa^*p_*p^*.
\]
The vanishing cycles functor $\phi_f$ is defined so that for $\mathcal{F}\in\Ob(\Db(\Perv(X)))$ there is a distinguished triangle 
\[
\kappa_*\kappa^*\mathcal{F}\rightarrow \psi_f\mathcal{F}\rightarrow \phi_f\mathcal{F}.
\]
Both $\psin{f}\coloneqq \psi_f[-1]$ and $\phin{f}\coloneqq\phi_f[-1]$ send perverse sheaves to perverse sheaves \cite[Cor.10.3.13]{KSsheaves} and (naturally) commute with Verdier duality \cite{Ma09}.  
\smallbreak
We give a lightning account of the critical cohomological Hall algebra associated to a quiver with potential.  More details can be found in \cite{KS2,QEAs}.  For a stack $\Mst$ for which each connected component is irreducible and generically smooth, we define the intersection complex
\[
\ICS_{\Mst}\coloneqq\coprod_{\mathfrak{N}\in\pi_0(\Mst)}\ICS_{\mathfrak{N}}(\mathbb{Q}_{\mathfrak{N}^{\mathrm{sm}}}[\dim(\mathfrak{N})]),
\]
i.e. it is the intermediate extension of the constant perverse sheaf from the smooth locus.
We define 
\[
\rCoha_{Q,W}\coloneqq \JH_*\phin{\Tr(W)}\ICS_{\mathfrak{M}(Q)}.
\]
The morphism $\pi_2$ from \eqref{corr_diag} is proper, so that there is a natural integration map 
\[
\alpha_{\dd',\dd''}\colon\pi_{2,*}\mathbb{Q}_{\mathfrak{E}\mathrm{xact}_{\dd',\dd''}(Q)}\rightarrow \mathbb{Q}_{\mathfrak{M}_{\dd'+\dd''}(Q)}[-2\chi(\dd',\dd'')]
\]
(the shift is given by the relative dimension of $\pi_2$).  Composing appropriate shifts of the morphisms
\begin{align*}
&\oplus_*\JH_*\phin{\Tr(W)}\left(\mathbb{Q}_{\mathfrak{M}(Q)\times\mathfrak{M}(Q)}\rightarrow (\pi_1\times \pi_3)_*\mathbb{Q}_{\mathfrak{E}\mathrm{xact}} \right)
\end{align*}
and the sum of $\JH_*\phin{\Tr(W)}\alpha_{\dd',\dd''}$ over pairs $(\dd',\dd'')$, and using commutativity of vanishing cycle functors with proper and with smooth morphisms, we obtain the morphism
\[
\beta\colon \oplus_*(\JH\times\JH)_*\phin{\Tr(W)}\ICS_{\Mst_{\dd'}(Q)\times\Mst_{\dd''}(Q)}\rightarrow \JH_*\phin{\Tr(W)}\ICS_{\Mst_{\dd}(Q)}.
\]
Finally, composing $\beta$ with $\oplus_*(\JH\times\JH)_*\TS$, where $\TS$ is (a shift of) the Thom--Sebastiani isomorphism \cite{Ma01}
\[
\TS\colon \phin{\Tr(W)}\mathbb{Q}_{\Mst_{\dd'}(Q)}\boxtimes\phin{\Tr(W)}\mathbb{Q}_{\Mst_{\dd''}(Q)}\xrightarrow{\cong} \phin{\Tr(W)}\mathbb{Q}_{\Mst_{\dd'}(Q)\times\Mst_{\dd''}(Q)}
\]
we define the (relative) Hall algebra multiplication
\begin{equation}
\label{relProd}
\rCoha_{Q,W}\boxtimes_{\oplus}\rCoha_{Q,W}\rightarrow \rCoha_{Q,W}.
\end{equation}
The cohomology
\begin{align*}
\Coha_{Q,W}\coloneqq &\HO\!\left(\Mst(Q),\phin{\Tr(W)}\ICS_{\Mst(Q)}\right)\\
\cong&\HO(\Msp(Q),\rCoha_{Q,W})
\end{align*}
has a $\dvs$-grading by dimension vectors induced by the decomposition $\Msp(Q)=\coprod_{\dd\in\dvs}\Msp_{\dd}(Q)$, and the associative product induced by taking derived global sections of the morphism \eqref{relProd} respects this grading.
\smallbreak
Assume that $Q$ is symmetric\footnote{There is a version of the integrality theorem for non-symmetric quivers, concerning vanishing cycle cohomology of stacks of semistable $\mathbb{C}Q$-modules, but we won't need it in this paper.  See \cite[Thm.A Thm.C]{QEAs}}.  For $\dd\in \dvs$ we define 
\begin{equation}
\label{BPSdef}
\DTS_{Q,W,\dd}\coloneqq \begin{cases}\phin{\Tr(W)}\ICS_{\Msp_{\dd}(Q)}& \textrm{if there is a $\dd$-dimensional simple $\mathbb{C}Q$-module}\\ 0&\textrm{otherwise.}\end{cases}
\end{equation}
According to our conventions, $\ICS_{\Msp_{\dd}(Q)}$ is the intermediate extension of the constant perverse sheaf $\mathbb{Q}_{\Msp^{\simp}_{\dd}(Q)}[1-\chi(\dd,\dd)]$ on the (open, dense) subscheme of $\Msp_{\dd}(Q)$ corresponding to simple modules.  Since $\ICS_{\Msp_{\dd}(Q)}$ is Verdier self-dual, and vanishing cycle functors commute with Verdier duality \cite{Ma09}, there are natural isomorphisms
\begin{equation}
\label{nvsd}
\mathbb{D}\DTS_{Q,W,\dd}\cong\DTS_{Q,W,\dd}.
\end{equation}
We recall the following version of the \textit{cohomological integrality theorem} from \cite{QEAs}
\begin{theorem}
\label{IT1}
There is an isomorphism of bounded above complexes of perverse sheaves
\[
\JH_!\phin{\Tr(W)}\ICS_{\Mst(Q)}\cong\Sym_{\boxtimes_{\oplus}}\left(\bigoplus_{\dvs\ni\dd\neq 0}\DTS_{Q,W,\dd}\otimes\HO_c(\pt/\mathbb{C}^*)_{\vir}\right)
\]
where 
\[
\HO_c(\pt/\mathbb{C}^*)_{\vir}\cong \bigoplus_{i\in\mathbb{Z}_{\geq 0}}\mathbb{Q}[1+2i].
\]
\end{theorem}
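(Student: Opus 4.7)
The plan is to realise the isomorphism as a consequence of a PBW-type decomposition of the relative cohomological Hall algebra $\rCoha_{Q,W}$, where the primitive generators are identified with the BPS sheaves $\DTS_{Q,W,\dd}$ sitting inside $\rCoha_{Q,W,\dd}$ as the intermediate extensions from the simple locus. The strategy mirrors the classical PBW theorem for enveloping algebras, but carried out in a relative perverse-sheaf-theoretic setting over the coarse moduli space $\Msp(Q)$.

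First I would construct a morphism from the right-hand side to the left. By the definition \eqref{BPSdef}, $\DTS_{Q,W,\dd}$ sits as a canonical perverse subsheaf of $\rCoha_{Q,W,\dd}$. The tensor factor $\HO_c(\pt/\Gm)_{\vir}$ accounts for the $\Gm$-stabiliser automorphisms of a stable representation, i.e.\ for the passage from the coarse moduli description of the BPS sheaf to the stacky cohomology on the open stratum of $\Mst(Q)$ of stable modules. Using the relative Hall multiplication \eqref{relProd} iteratively and observing that its restriction to BPS sub-sheaves factors through the symmetric part, one obtains a morphism
\[
\Sym_{\boxtimes_{\oplus}}\Bigl(\bigoplus_{\dvs\ni\dd\neq 0}\DTS_{Q,W,\dd}\otimes\HO_c(\pt/\mathbb{C}^*)_{\vir}\Bigr)\longrightarrow \JH_!\phin{\Tr(W)}\ICS_{\Mst(Q)}.
\]
The symmetry comes from Verdier self-duality \eqref{nvsd} of the BPS sheaves combined with the symmetry of the Euler form, which forces a commutativity-up-to-CoHA-filtration on the BPS generators. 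The fact that the target uses $\JH_!$ rather than $\JH_*$ is handled using the approximation of $\JH$ by projective maps, which ensures $\JH_!$ commutes with $\phin{\Tr(W)}$.

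To prove this morphism is an isomorphism, I would stratify $\Msp_{\dd}(Q)$ by Jordan--Hölder type $\lambda$, where $\lambda$ records the dimension vectors of the simple summands of the semisimple module represented by a point. Étale-locally over each stratum, the pair $(\Mst_{\dd}(Q),\Tr(W))$ is formally modelled on the representation stack of a local \emph{Ext-quiver} $Q^{\lambda}$ with induced potential $W^{\lambda}$, whose simples correspond to the chosen simple factors. The Thom--Sebastiani isomorphism then identifies the local contributions as external products of BPS sheaves for smaller dimension vectors, and the stabiliser group $\prod_i\Gl_{\lambda_i}$ produces precisely the $\Sym$ structure via the identification $\HO_c(B\Gl_n)_{\vir}\cong\Sym^n(\HO_c(B\Gm)_{\vir})$ together with the finite map $\oplus\colon\Msp(Q)\times\Msp(Q)\to\Msp(Q)$.

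The hard part will be the support/primitivity property: one must show that no extra perverse summands appear on the left-hand side beyond those produced by symmetric products of BPS sheaves on lower-dimensional strata. Concretely, this requires showing that the perverse filtration on $\JH_!\phin{\Tr(W)}\ICS_{\Mst(Q)}$ induced by the CoHA multiplication has associated graded exactly the symmetric algebra on the BPS generators. This is the technical core of \cite{QEAs}, and is carried out using a relative Hard Lefschetz style argument on neighbourhoods of polystable representations, exploiting $\Gm$-actions that contract such neighbourhoods onto the polystable locus, together with Verdier self-duality of the BPS sheaves to propagate the decomposition across all strata.
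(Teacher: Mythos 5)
First, a point of orientation: this paper does not prove Theorem \ref{IT1} at all --- it is quoted from \cite{QEAs} --- so the comparison to make is with the proof given there rather than with anything in the present text.

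Measured against that, your architecture is genuinely different. In \cite{QEAs} the theorem is first established in the potential-free case: one proves directly that $\JH_!\ICS_{\Mst(Q)}\cong\Sym_{\boxtimes_{\oplus}}\bigl(\bigoplus_{\dd\neq 0}\ICS_{\Msp_{\dd}(Q)}\otimes\HO_c(\pt/\CC^*)_{\vir}\bigr)$ by applying the decomposition theorem to proper approximations of $\JH$ and analysing the finite map $\oplus$ restricted to products of simple loci. The case of general $W$ is then obtained by applying $\phin{\Tr(W)}$ to this isomorphism: vanishing cycles commute with $\JH_!$ (approximation by projective maps), with the finite map $\oplus_*$, and with external products via Thom--Sebastiani, so the symmetric algebra decomposition is simply transported. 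Your proposal instead attacks general $W$ head-on via formal local Ext-quiver models at polystable points; this is closer in spirit to later work on 2-Calabi--Yau categories than to \cite{QEAs}, and while such an approach can be made to work, it requires the Luna slice/cyclic $A_\infty$ local description of $(\Mst_{\dd}(Q),\Tr(W))$ and an induction over Ext-quivers that you do not set up.

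Two concrete gaps. First, you assert that by the definition \eqref{BPSdef} the sheaf $\DTS_{Q,W,\dd}$ sits as a canonical perverse subsheaf of $\rCoha_{Q,W,\dd}$. It does not: $\DTS_{Q,W,\dd}=\phin{\Tr(W)}\ICS_{\Msp_{\dd}(Q)}$ lives on the coarse space, whereas $\rCoha_{Q,W,\dd}=\JH_*\phin{\Tr(W)}\ICS_{\Mst_{\dd}(Q)}$; the identification ${}^{\mathfrak{p}}\!\Ho^1(\rCoha_{Q,W,\dd})\cong\DTS_{Q,W,\dd}$ is a \emph{consequence} of the integrality theorem (it is how Theorem \ref{CIT} is phrased), so using it to construct your comparison map is circular unless you first prove, by a separate support argument over the simple locus, that the lowest piece of the perverse filtration on $\JH_!\phin{\Tr(W)}\ICS_{\Mst_{\dd}(Q)}$ is exactly $\phin{\Tr(W)}\ICS_{\Msp_{\dd}(Q)}$. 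Second, the step you flag as ``the hard part'' --- that no perverse summands occur beyond symmetric products of BPS sheaves --- is the entire content of the theorem, and a relative Hard Lefschetz argument on contracting neighbourhoods is not obviously available here, since in the presence of a potential one has no a priori purity to exploit; the mechanism that actually closes this step is the reduction to $W=0$ described above. As written, the proposal is a plausible roadmap with the decisive steps either deferred or assumed.
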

We define the \textit{cohomological BPS invariants}
\[
\DT_{Q,W,\dd}\coloneqq \HO(\Msp_{\dd}(Q),\DTS_{Q,W,\dd}).
\]
Applying the compactly supported cohomology functor to Theorem \ref{IT1}, and using self-Verdier duality \eqref{nvsd} of $\DTS_{Q,W,\dd}$, yields
\[
\bigoplus_{\dd\in\mathbb{N}^{Q_0}}\HO_c(\Mst_{\dd}(Q),\phin{\Tr(W)}\ICS_{\Mst_{\dd}(Q)})\cong \Sym\left(\bigoplus_{\dvs\ni\dd\neq 0}\DT^{\vee}_{Q,W,\dd}\otimes\HO_c(\pt/\mathbb{C}^*)_{\vir}\right).
\]
The \textit{BPS invariants} of the Jacobi algebra $\Jac(Q,W)$ are defined via
\begin{align*}
\omega_{Q,W,\dd}=&\chi(\HO(\Msp_{\dd}(Q),\DTS_{Q,W,\dd}))\\
\coloneqq &\sum_{i\in\mathbb{Z}}(-1)^i\dim\!\left( \HO^i\!\left(\Msp_{\dd}(Q),\DTS_{Q,W,\dd}\right)\right)\\
=&\chi(\HO_c(\Msp_{\dd}(Q),\DTS_{Q,W,\dd}))
\end{align*}
where the final identity again follows from Verdier self-duality of the BPS sheaf.  Turning to Verdier duals, we have instead
\begin{theorem}\cite{QEAs}
\label{CIT}
There is an isomorphism of unbounded complexes of perverse sheaves
\[
\rCoha_{Q,W}\cong\Sym_{\boxtimes_{\oplus}}\left(\bigoplus_{\dvs\ni\dd\neq 0}\DTS_{Q,W,\dd}\otimes\HO(\pt/\mathbb{C}^*)_{\vir}\right)
\]
where
\[
\HO(\pt/\mathbb{C}^*)_{\vir}=\bigoplus_{i\in\mathbb{Z}_{\geq 0}}\mathbb{Q}[-1-2i],
\]
so that
\[
{}^{\mathfrak{p}}\!\Ho^1(\rCoha_{Q,W})=({}^{\mathfrak{p}}\tau^{\leq 1}\rCoha_{Q,W})[1]\cong \bigoplus_{\dvs\ni\dd\neq 0}\DTS_{Q,W,\dd}.
\]
Applying the natural transformation ${}^{\mathfrak{p}}\tau^{\leq 1}\rightarrow \id$ to $\rCoha_{Q,W}$ and taking hypercohomology, there is a natural inclusion
\[
\mathfrak{g}_{Q,W}\coloneqq \DT_{Q,W}[-1]= \HO(\Msp(Q),\DTS_{Q,W})[-1]\hookrightarrow \Coha_{Q,W}.
\]
The image of this inclusion is closed under the commutator\footnote{Strictly speaking, for this part of the theorem to be true, we need to twist the symmetric monoidal structure on $\dvs$-graded, cohomologically graded vector spaces by a sign, over and above the Koszul sign rule (see \cite[Sec.1.6, Sec.6.1]{QEAs}).  Thankfully for the quiver $\tilde{Q}$ this sign is always $+$ (see \cite[Rem[2.3]{preproj3}).} Lie bracket induced by the associative algebra structure on $\Coha_{Q,W}$.
\end{theorem}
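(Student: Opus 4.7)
The plan is to deduce this statement from Theorem \ref{IT1} by Verdier duality, identify the top perverse cohomology by a degree count on the symmetric algebra, and extract the Lie bracket via a PBW-type argument for the perverse filtration. Starting from Theorem \ref{IT1}: each component $\Mst_\dd(Q) = \mathbb{A}_\dd(Q)/\Gl_\dd$ is a smooth global quotient, so $\ICS_{\Mst(Q)}$ is Verdier self-dual. The vanishing cycles functor $\phin{\Tr(W)}$ commutes with Verdier duality by \cite{Ma09}, Verdier duality swaps $\JH_*$ with $\JH_!$, and since $\oplus$ is finite, $\VD$ commutes with $\oplus_*$ and hence with the symmetric monoidal operation $\boxtimes_\oplus$, therefore with $\Sym_{\boxtimes_\oplus}$ as well. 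Combined with the self-duality \eqref{nvsd} of the BPS sheaves and the identification $\VD\HO_c(\pt/\CC^*)_{\vir}\cong\HO(\pt/\CC^*)_{\vir}$ (which simply flips the signs of cohomological shifts), applying $\VD$ throughout Theorem \ref{IT1} produces the desired isomorphism for $\rCoha_{Q,W}$.

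For the perverse cohomology identification, observe that each summand $\DTS_{Q,W,\dd}\otimes\HO(\pt/\CC^*)_{\vir}=\bigoplus_{i\geq 0}\DTS_{Q,W,\dd}[-1-2i]$ is supported in perverse cohomological degrees $\{1,3,5,\ldots\}$, with ${}^{\mathfrak{p}}\Ho^1$ equal to $\DTS_{Q,W,\dd}$. Since $\oplus$ is finite, $\oplus_*$ is $t$-exact for the perverse $t$-structure, so the $n$-fold symmetric summand $\Sym_{\boxtimes_\oplus}^n$ of a complex concentrated in perverse degrees $\geq 1$ lives in perverse degrees $\geq n$. Hence over $\Msp_\dd(Q)$ with $\dd\neq 0$, only the linear ($n=1$) piece of $\Sym_{\boxtimes_\oplus}$ contributes to ${}^{\mathfrak{p}}\tau^{\leq 1}\rCoha_{Q,W}$, yielding the asserted description of ${}^{\mathfrak{p}}\Ho^1(\rCoha_{Q,W})$. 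Moreover, $\rCoha_{Q,W}$ is a direct sum of shifted IC sheaves (using the decomposition theorem for the finite morphism $\oplus$ together with the fact that external products of IC sheaves are IC), so the perverse truncation triangle splits, and applying $\HO(\Msp(Q),-)$ to the morphism ${}^{\mathfrak{p}}\tau^{\leq 1}\rCoha_{Q,W}\to\rCoha_{Q,W}$ produces the split inclusion $\mathfrak{g}_{Q,W}\hookrightarrow\Coha_{Q,W}$.

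To establish closure under the commutator, I would use a PBW-style argument with respect to the perverse filtration $F^n\coloneqq\Imag(\HO(\Msp(Q),{}^{\mathfrak{p}}\tau^{\leq n}\rCoha_{Q,W})\to\Coha_{Q,W})$. Granting that the Hall multiplication \eqref{relProd} respects this filtration and that the induced multiplication on the associated graded coincides, under the isomorphism of the theorem, with the supercommutative product of $\Sym_{\boxtimes_\oplus}(\bigoplus\DTS_{Q,W,\dd}\otimes\HO(\pt/\CC^*)_{\vir})$, the commutator of $\alpha\in\mathfrak{g}_{Q,W,\dd}$ and $\beta\in\mathfrak{g}_{Q,W,\ee}$ with $\dd,\ee\neq 0$ vanishes in the $\Sym^2$ graded piece by supercommutativity, hence lies in $F^1$; since $F^0$ is supported only at the origin of $\Msp(Q)$ and so vanishes in dimension vector $\dd+\ee\neq 0$, the commutator lies in $\mathfrak{g}_{Q,W,\dd+\ee}$. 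The main obstacle is precisely the pair of compatibilities just invoked: strict preservation of the perverse filtration by the Hall multiplication, and identification of the graded multiplication with the supercommutative $\Sym_{\boxtimes_\oplus}$-product. Both require detailed analysis of the Thom--Sebastiani isomorphism $\TS$ and of the interplay between the correspondence \eqref{corr_diag} and the direct sum morphism $\oplus$, which is the essential technical content of \cite{QEAs}.
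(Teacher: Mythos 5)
This theorem is quoted from \cite{QEAs} and the paper offers no proof of it, so there is no in-paper argument to measure you against; judged on its own terms, your sketch does follow the actual strategy of \cite{QEAs}. The Verdier-duality passage from Theorem \ref{IT1} to the $\JH_*$ statement is the right move (self-duality of $\ICS_{\Mst(Q)}$ and of the BPS sheaves via \eqref{nvsd}, commutation of $\phin{}$ with $\VD$, exchange of $\JH_!$ with $\JH_*$, and $\VD$-compatibility of $\boxtimes_\oplus$ because $\oplus$ is finite), modulo the usual caveat that dualising the unbounded object $\HO_c(\pt/\CC^*)_{\vir}$ requires the local-finiteness in each $(\dd,i)$-degree that \cite{QEAs} tracks. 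Your perverse-degree count identifying ${}^{\mathfrak{p}}\Ho^1$ is correct. One step is inaccurate as written: $\rCoha_{Q,W}$ is not known to be a direct sum of shifted \emph{IC} sheaves, since the BPS sheaves $\DTS_{Q,W,\dd}$ are perverse but not in general semisimple; however, the splitting of the truncation triangle you want follows directly from the $\Sym_{\boxtimes_\oplus}$ decomposition itself, which exhibits $\bigoplus_\dd\DTS_{Q,W,\dd}[-1]$ as a direct summand, so the conclusion stands. Finally, you are candid that the two pivotal inputs for closure under the commutator --- that the Hall product is filtered for the perverse filtration and that the associated graded is (sign-twisted) supercommutative --- are granted rather than proven; these are precisely the technical heart of \cite{QEAs}, and note that the supercommutativity you invoke is only valid after the sign twist recorded in the theorem's footnote, a point your argument should acknowledge for general $(Q,W)$.
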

\label{IT2}
The resulting Lie algebra $\mathfrak{g}_{Q,W}$ is called the \textit{BPS Lie algebra} for the pair $(Q,W)$, see \cite{QEAs} for more details.
\subsection{Hodge theoretic BPS invariants}

We give another lightning introduction, this time to Hodge theoretic DT theory, via monodromic mixed Hodge modules.  For more details, we refer the reader to \cite{KS2,QEAs}, and for a comparison with the treatment of monodromic mixed Hodge modules in \cite{Saito10}, we refer the reader to \cite[Sec.2]{QEAs}.  Mixed Hodge structures are important in the subject of refined DT theory, since the extra $q$-variable appearing in refined DT theory keeps track of the weight filtration on certain mixed Hodge structures.  On the other hand, our main result states that all mixed Hodge structures appearing in this paper are pure, so that weight polynomials can be replaced by Poincar\'e polynomials.  The takeaway is that this section can be skimmed by the reader that is happy to use the purity part of Theorem \ref{main_thm} to identify the refined BPS invariants of $\Jac(\tilde{Q},\tilde{W}^{\mu})$ with the Poincar\'e polynomials of the BPS cohomology.
\smallbreak
For $X$ a variety we denote by $\MHM(X)$ the category of mixed Hodge modules on $X$.  There is an equivalence of categories between $\MHM(\pt)$ and the category of graded-polarizable mixed Hodge structures.  Let $\mathcal{B}_X$ denote the full subcategory of $\MHM(X\times\mathbb{A}^1)$ containing those mixed Hodge modules $\mathcal{F}$ such that for each $x\in X$ and $i\in\mathbb{Z}$ the mixed Hodge modules 
\begin{equation}
\label{pbmhm}
\Ho^i\!\left((\{x\}\times\mathbb{A}^1\rightarrow X\times\mathbb{A}^1)^*\mathcal{F}\right)
\end{equation}
are locally constant away from $x\times \{0\}$.  We denote by $\mathcal{C}_X$ the full subcategory of $\mathcal{B}_X$ containing those $\mathcal{F}$ such that each \eqref{pbmhm} is constant.  Equivalently, we may define $\mathcal{C}_X$ as the essential image of $\pi_X^*[1]$, for $\pi_X\colon X\times\mathbb{A}^1\rightarrow X$ the projection.  We denote by $\MMHM(X)$ the Serre quotient $\mathcal{B}_X/\mathcal{C}_X$.  There is an embedding of categories $\MHM(X)\hookrightarrow \MMHM(X)$ defined via 
\[
(X\times\{0\}\hookrightarrow X\times\mathbb{A}^1)_*\colon \MHM(X)\rightarrow \MHM(X\times\mathbb{A}^1).
\]
The direct image functor
\[
\Theta\coloneqq (X\times\CC^*\hookrightarrow X\times\mathbb{A}^1)_*
\]
induces an equivalence of categories between the category of mixed Hodge modules on $X\times \CC^*$ with locally constant cohomology sheaves after restriction to each $\{x\}\times \mathbb{C}^*$ and the category of monodromic mixed Hodge modules on $X$.  Denoting by $\Theta^{-1}$ an inverse equivalence, there is a faithful forgetful functor
\[
\rat\circ (X\xrightarrow{x\mapsto (x,1)}X\times\CC^*)^*\circ\Theta^{-1}[-1]
\]
taking a monodromic mixed Hodge module to its underlying perverse sheaf on $X$.  We abuse notation by denoting this functor also by $\rat$.  For $X$ a variety, we denote by $\underline{\mathbb{Q}}_X$ the lift of the constant sheaf $\mathbb{Q}_X$ to a complex of mixed Hodge modules on $X$.  For $f$ a regular function on $X$ we define the vanishing cycles functor
\begin{align*}
\phim{f}\colon&\MHM(X)\rightarrow \MMHM(X)\\
&\mathcal{F}\mapsto \Theta \underline{\phi}_{u\cdot f}(\mathcal{F}\boxtimes\underline{\mathbb{Q}}_{\CC^*})[1]
\end{align*}
where $u$ is the coordinate on $\CC^*$ and $\underline{\phi}_{u\cdot f}$ is the lift of $\phin{u\cdot f}$ to the categories of mixed Hodge modules.  There is a natural isomorphism $\rat \phim{f}\cong \phin{f}\rat$.
\smallbreak
An object $\mathcal{F}\in\Ob(\MMHM(X))$ inherits a weight filtration from the weight filtration on objects of $\MHM(X\times\mathbb{A}^1)$.  We say that $\mathcal{F}$ is pure of weight $n$ if the associated graded object with respect to this filtration is concentrated in degree $n$.  We say that an object $\mathcal{F}\in\Ob(\Db(\MMHM(X)))$ is \textit{pure} if each $\Ho^i(\mathcal{F})$ is pure of weight $i$.
\smallbreak
The cohomologically graded mixed Hodge structure $\LLL=\HO_c(\mathbb{A}^1,\mathbb{Q})$ is pure: it is concentrated in cohomological degree two, and is pure of weight two.  This object has a tensor square root in $\Db(\MMHM(\pt))$ provided by
\[
\LLL^{1/2}\coloneqq \cone(\underline{\mathbb{Q}}_{\mathbb{A}_1}\rightarrow d_*\underline{\mathbb{Q}}_{\mathbb{A}_1})
\]
where $d\colon \mathbb{A}^1\rightarrow \mathbb{A}^1$ is the morphism $z\mapsto z^2$.  We say a monodromic mixed Hodge structure is of Tate type if it is a direct sum of (possibly negative) tensor powers of the monodromic mixed Hodge structure $\LLL^{1/2}[1]$.
\smallbreak
For $X$ an irreducible variety we denote by
\[
\ICS^{\hdg}_{X}\coloneqq \ICS_X\left(\underline{\mathbb{Q}}_{X^{\mathrm{sm}}}[\dim X]\right)\otimes \LLL^{-\dim(X)/2}[-\dim X]
\]
the natural lift of $\ICS_X$ to a pure weight zero monodromic mixed Hodge module.
\smallbreak
We define 
\begin{equation}
\label{hBPSdef}
\DTS^{\hdg}_{Q,W,\dd}\coloneqq \begin{cases}\phim{\Tr(W)}\ICS^{\hdg}_{\Msp_{\dd}(Q)}& \textrm{if there is a $\dd$-dimensional simple $\mathbb{C}Q$-module}\\ 0&\textrm{otherwise.}\end{cases}
\end{equation}
This is the natural lift of the BPS sheaf to a monodromic mixed Hodge module, i.e. $\rat(\DTS^{\hdg}_{Q,W,\dd})\cong\DTS_{Q,W,\dd}$.  Similarly we define the monodromic mixed Hodge structure
\[
\DT^{\hdg}_{Q,W,\dd}\coloneqq \HO(\Msp_{\dd}(Q),\DTS^{\hdg}_{Q,W,\dd})
\]
satisfying $\rat(\DT^{\hdg}_{Q,W,\dd})\cong\DT_{Q,W,\dd}$.  We define\footnote{Since there is not a fully developed theory of mixed Hodge modules for stacks, some care has to be taken care with this definition.  See \cite{QEAs} for the details.} 
\[
\rCoha_{Q,W}^{\hdg}=\JH_*\phim{\Tr(W)}\ICS_{\Mst(Q)}^{\hdg}.
\]
Since all of the natural transformations defining the multiplication on the Hall algebra $\Coha_{Q,W}$ lift to categories of monodromic mixed Hodge modules \cite{KS2,Da13, QEAs}, as does the Thom--Sebastiani theorem \cite{Saito10}, we may define a multiplication on $\rCoha_{Q,W}^{\hdg}$ that recovers the multiplication on $\rCoha_{Q,W}$ after applying the functor $\rat$.  Likewise, taking derived direct image to a point we obtain the algebra object $\Coha^{\hdg}_{Q,W}$ in monodromic mixed Hodge structures.  Then by \cite{QEAs}, Theorems \ref{IT2} and \ref{IT1} lift to the categories of monodromic mixed Hodge modules and monodromic mixed Hodge structures.  In particular, the BPS Lie algebra $\mathfrak{g}_{Q,W}=\DT_{Q,W}[-1]$ lifts to a Lie algebra object 
\[
\mathfrak{g}^{\hdg}_{Q,W}\coloneqq \DT^{\hdg}_{Q,W}\otimes\LLL^{1/2}
\]
inside $\Db(\MMHM(\pt))$.  This is a Lie subalgebra of $\Coha_{Q,W}^{\hdg}$, considered as a Lie algebra in the category of $\mathbb{N}^{Q_0}$-graded, cohomologically graded monodromic mixed Hodge structures, via the commutator Lie bracket.  See \cite{Da13, QEAs} for full details.
\subsection{Cohomological Donaldson--Thomas theory for preprojective algebras}
In this section we restrict our attention to ``tripled'' QPs of the form $(\tilde{Q},\tilde{W})$, as in the introduction.  Firstly, we recall the following purity result on the BPS cohomology of the Jacobi algebra $\Jac(\tilde{Q},\tilde{W})$:
\begin{theorem}\cite{preproj}
\label{purity_thm}
For an arbitrary quiver $Q$ and dimension vector $\dd\in\dvs$, the mixed Hodge structure $\DT^{\hdg}_{\tilde{Q},\tilde{W},\dd}$ is pure, of Tate type.  In addition (or as a consequence of the cohomological integrality theorem) the mixed Hodge structure on 
\[
\HO_c(\Mst_{\dd}(\tilde{Q}),\phim{\Tr(\tilde{W})}\ICS^{\hdg}_{\Mst_{\dd}(\tilde{Q})})
\]
is pure, of Tate type.
\end{theorem}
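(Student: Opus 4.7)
The approach combines dimensional reduction with a purity statement for the Borel--Moore homology of the stack of preprojective algebra representations, and then uses the cohomological integrality theorem to extract the BPS summands.

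The first key ingredient is dimensional reduction. Writing the trace of $\tilde{W}$ at the level of the representation space as
\[
\Tr(\tilde{W})_{\dd} = \sum_{i \in Q_0}\Tr\!\bigl(\omega_i\cdot \mu_i(a,a^*)\bigr),
\]
with $\mu$ the complex moment map for $T^*\mathbb{A}_{\dd}(Q)$, one sees that $\Tr(\tilde{W})$ is linear in the loop variables $\omega_i$. The dimensional reduction isomorphism of \cite{Da13}, in its monodromic mixed Hodge module enhancement from \cite{QEAs}, then yields a canonical identification, up to cohomological shift and Tate twist,
\[
\HO_c\bigl(\Mst_{\dd}(\tilde{Q}),\phim{\Tr(\tilde{W})}\ICS^{\hdg}_{\Mst_{\dd}(\tilde{Q})}\bigr)\;\cong\; \HO_c\bigl(\Mst_{\dd}(\Pi_Q),\underline{\mathbb{Q}}\bigr),
\]
with $\Mst_{\dd}(\Pi_Q)=\mu^{-1}(0)/\Gl_{\dd}$. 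Consequently the second statement of the theorem is equivalent to purity of the right-hand side.

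The main technical step is to establish that $\HO_c(\Mst_{\dd}(\Pi_Q),\mathbb{Q})$ is pure of Tate type. The idea is to exploit the $\mathbb{G}_m$-action scaling the cotangent arrows $a^*$ with weight one while fixing the $a$; this action preserves the moment-map equation. Stratifying $\mu^{-1}(0)$ by the Jordan type of the underlying $\mathbb{C}Q$-representation, one identifies the associated graded pieces (with respect to an induced weight filtration) with cohomologies of open subvarieties of Nakajima quiver varieties. These are known to be pure and of Tate type by classical results of Nakajima, via affine paving and hyperk\"ahler rotation. Propagating purity across the strata through the long exact sequences in compactly supported cohomology then yields purity for the whole preprojective stack.

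Purity of the BPS summand $\DT^{\hdg}_{\tilde{Q},\tilde{W},\dd}$ is then extracted by induction on $\dd$ using the cohomological integrality theorem (Theorem \ref{IT1}, in its mixed Hodge module form): since the full cohomology just computed is pure of Tate type, and $\HO_c(\pt/\CC^*)_{\vir}$ is pure of Tate type, each BPS summand must itself be pure Tate, because the weight of the symmetric algebra expression is controlled additively by the weights of the BPS factors. The main obstacle is the middle step: controlling weights on the preprojective stack for arbitrary (and in particular imaginary) dimension vectors, where one cannot directly reduce to a smooth quiver variety of Schur-root type and must instead glue pure contributions stratum by stratum.
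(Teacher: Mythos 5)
This theorem is not proved in the present paper at all --- it is imported wholesale from \cite{preproj} --- so your sketch can only be assessed on its own terms. Its outer layers are sound and do match the standard reductions: $\Tr(\tilde{W})$ is linear in the loops $\omega_i$, so undeformed dimensional reduction identifies $\HO_c(\Mst_{\dd}(\tilde{Q}),\phim{\Tr(\tilde{W})}\ICS^{\hdg}_{\Mst_{\dd}(\tilde{Q})})$ with $\HO_c(\Mst_{\dd}(\Pi_Q),\mathbb{Q})$ up to a Tate twist, and the cohomological integrality theorem does let one pass between purity of this total cohomology and purity of the BPS summands, since $\DT^{\hdg,\vee}_{\tilde{Q},\tilde{W},\dd}\otimes\HO_c(\pt/\mathbb{C}^*)_{\vir}$ is a direct summand of the degree-$\dd$ piece of the symmetric algebra and direct summands of pure objects are pure (this also gives the ``in addition'' clause of the statement in the direction the paper intends).

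The gap is in the middle step, which is where all the content lies, as you yourself acknowledge. Propagating purity through the long exact sequences of an open--closed stratification requires each stratum to have compactly supported cohomology that is pure \emph{with weight equal to cohomological degree}; only then are the connecting maps forced to vanish. That condition holds for affine spaces or affine bundles over smooth proper bases, but it fails for general open subvarieties of smooth varieties with pure cohomology: already $\HO^1_c(\mathbb{G}_m)$ has weight $0$. So identifying your strata with ``open subvarieties of Nakajima quiver varieties'' does not give purity of the strata, and the gluing argument collapses exactly at the point it is needed. Moreover the strata are not actually constructed: a general $\mathbb{C}Q$-representation has no ``Jordan type'', and the quotient $\mu^{-1}(0)/\Gl_{\dd}$ carries neither a framing nor a stability condition, so it is unclear why its pieces should be related to Nakajima quiver varieties at all; and ``affine paving and hyperk\"ahler rotation'' do not yield purity for arbitrary $Q$ and $\dd$ (affine pavings are known only in special cases, and hyperk\"ahler rotation is not a weight argument --- what gives purity of quiver varieties in general is semiprojectivity, i.e.\ a contracting $\mathbb{C}^*$-action with proper fixed locus, and that statement concerns the varieties themselves, not open subvarieties). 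The argument of \cite{preproj} does not run through such a stratification: it leans on the support lemma for the BPS sheaves (Lemma \ref{support_theorem}) and on comparisons with genuine smooth semiprojective framed moduli spaces, fed through the integrality isomorphism by induction on dimension vectors. Your sketch correctly locates the difficulty but does not overcome it.
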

We denote by
\begin{equation}
\label{edef}
\tilde{e}\colon \Msp(\Pi_Q)\times\mathbb{A}^1\rightarrow \Msp(\tilde{Q})
\end{equation}
the closed embedding that sends a pair $(\rho,t)$ to the $\CC\tilde{Q}$-module $\rho'$ for which the action of the arrows $a,a^*\in\overline{Q}$ are the same as for $\rho$, and the action of each $\rho'(\omega_i)$ is given by multiplication by $t$.
\smallbreak
We will need the following result on the support and equivariance of the BPS sheaf itself:
\begin{lemma}\cite{preproj}
\label{support_theorem}
For a quiver $Q$ and dimension vector $\dd\in\dvs$ there is a perverse sheaf 
\[
\DTS_{\Pi_Q,\dd}\in \Perv(\Msp_{\dd}(\Pi_Q))
\]
such that there is an isomorphism
\[
\DTS_{\tilde{Q},\tilde{W},\dd}\cong \tilde{e}_!\left(\DTS_{\Pi_Q,\dd}\boxtimes \ICS_{\mathbb{A}^1}\right).
\]
The same result holds at the level of monodromic mixed Hodge modules.
\end{lemma}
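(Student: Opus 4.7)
The plan is to identify the support of $\DTS_{\tilde Q, \tilde W, \dd}$ with the image of $\tilde e$, and then decouple an $\mathbb{A}^1$-factor using the fact that the central scalar direction in $\omega = \sum_i \omega_i$ does not appear in the potential, reducing the remaining computation to dimensional reduction on the preprojective side. By Proposition \ref{gKW}, $\Jac(\tilde Q, \tilde W) \cong \Pi_Q[\omega]$ with $\omega$ central; by Proposition \ref{sspp}, on any simple module $\omega$ acts by a single scalar $\lambda \in \mathbb{A}^1$. Thus the simple locus of $\Msp_{\dd}(\Jac(\tilde Q, \tilde W))$ lies set-theoretically in the closed image of $\tilde e$, and since $\DTS_{\tilde Q, \tilde W, \dd} = \phim{\Tr(\tilde W)} \ICS_{\Msp_{\dd}(\tilde Q)}$ has support contained in the critical locus of $\Tr(\tilde W)$ restricted to the smooth (in particular simple) locus of $\Msp_{\dd}(\tilde Q)$, the BPS sheaf is supported inside the image of $\tilde e$.

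Next, decompose $\mathbb{A}_{\dd}(\tilde Q) = \mathbb{A}_{\dd}(\overline Q) \times V$ with $V = \prod_i \End(\mathbb{C}^{\dd_i})$ the space of the loop data, and split $V = \Delta \oplus V'$ where $\Delta \cong \mathbb{A}^1$ is the diagonal of central scalars $\{(\lambda I_{\dd_i})_i\}$ and $V'$ is a $\Gl_{\dd}$-stable complement. Writing $\Tr(\tilde W) = \sum_i \Tr(\omega_i r_i)$ with $r_i = e_i \sum_{a \in Q_1}[a, a^*]$, the $\Delta$-contribution equals $\lambda \cdot \Tr\bigl(\sum_a [a,a^*]\bigr)$, which vanishes by cyclic invariance of trace. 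Hence $\Tr(\tilde W)$ is pulled back from the $\Gl_{\dd}$-equivariant projection onto $\mathbb{A}_{\dd}(\overline Q) \times V'$.

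Combining these observations: since $\Gl_{\dd}$ acts trivially on $\Delta$, the decomposition descends to a product structure on the coarse moduli space (at least near the image of $\tilde e$) with an $\mathbb{A}^1$-factor, and the product formula for vanishing cycles of a function independent of a coordinate splits off a factor of $\ICS_{\mathbb{A}^1}$ from $\DTS_{\tilde Q, \tilde W, \dd}$. For the remaining direction, $\Tr(\tilde W)$ is linear in the $V'$-fibre coordinates with zero locus precisely $\mathbb{A}_{\dd}(\Pi_Q) \subset \mathbb{A}_{\dd}(\overline Q)$; the dimensional reduction theorem of \cite{Da13}, applied $\Gl_{\dd}$-equivariantly and followed by the affinization $\JH$, then produces a perverse sheaf on $\Msp_{\dd}(\Pi_Q)$, which we take as the definition of $\DTS_{\Pi_Q, \dd}$.

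The main obstacle is one of compatibility: the IC sheaf $\ICS_{\Msp_{\dd}(\tilde Q)}$ (rather than the constant sheaf on $\mathbb{A}_{\dd}(\tilde Q)$) must be shown to split as an external product along the image of $\tilde e$ consistent with the decomposition above, which requires that the affinization map $\JH$ respects the product structure $\mathbb{A}_{\dd}(\tilde Q) = \mathbb{A}_{\dd}(\overline Q) \times \Delta \times V'$ in a neighbourhood of the BPS support and that the various perverse shifts line up. Once this is set up, the monodromic mixed Hodge module refinement is automatic, since dimensional reduction, the product formula for vanishing cycles, and the pushforward $\JH_*$ all lift to $\MMHM$.
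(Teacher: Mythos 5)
The paper does not actually prove this lemma: it is imported wholesale from \cite{preproj}, where the support statement is the substantive technical content. Measured against that proof, your proposal has a genuine gap at exactly the hard step. You deduce that $\supp(\DTS_{\tilde{Q},\tilde{W},\dd})$ lies in the image of $\tilde{e}$ from the claim that the support of $\phin{\Tr(\tilde{W})}\ICS_{\Msp_{\dd}(\tilde{Q})}$ is contained in the critical locus of $\Tr(\tilde{W})$ restricted to the simple locus. That claim is false: vanishing cycles of an intersection complex are not controlled by the open smooth locus, and the whole difficulty of the lemma is to control the BPS sheaf at \emph{strictly semisimple} points, where $\rho=\bigoplus_j\rho_j$ and the scalars $\lambda_j$ by which $\omega$ acts on the simple summands $\rho_j$ may differ; Proposition \ref{sspp} only handles simples. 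Concretely, take $Q=Q^{(1)}$, so $\Jac(\tilde{Q},\tilde{W})\cong\CC[x,y,z]$. For $n\geq 2$ there are no $n$-dimensional simple $\Jac(\tilde{Q},\tilde{W})$-modules, so the critical locus of $\Tr(\tilde{W})$ meets the simple locus of $\Msp_n(\tilde{Q})$ in the empty set and your criterion would force $\DTS_{\tilde{Q},\tilde{W},n}=0$; but $\kac_{Q^{(1)},n}(q)=q\neq 0$, so by Theorem \ref{DTKac} the BPS sheaf is nonzero --- its support is the small diagonal copy of $\mathbb{A}^3$, which consists entirely of non-simple semisimple modules. The proof in \cite{preproj} establishes the support statement by induction on $\dd$, combining the cohomological integrality theorem (Theorem \ref{IT1}) with an \'etale-local product decomposition of $\Msp_{\dd}(\tilde{Q})$ around a strictly semisimple point whose $\omega$-eigenvalues split into two groups, together with Thom--Sebastiani; none of that machinery appears in your argument, and without it the statement is not established.

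Two secondary remarks. Your observation that $\Tr(\tilde{W})$ is invariant under adding a common scalar to all the $\omega_i$ (because $\Tr\sum_{a\in Q_1}[a,a^*]=0$) is correct and is precisely the mechanism by which the factor $\ICS_{\mathbb{A}^1}$ splits off \emph{once the support statement is known}: the translation action is free on $\Msp_{\dd}(\tilde{Q})$, trivialised by the trace of $\omega$, and $\Tr(\tilde{W})$ descends to the quotient. However, the final step invoking dimensional reduction followed by $\JH_!$ to ``produce'' $\DTS_{\Pi_Q,\dd}$ is both unnecessary and circular: $\JH_!$ of the dimensionally reduced object is the entire relative CoHA, a complex of perverse sheaves rather than a single one, and isolating a BPS summand of it requires the integrality theorem again. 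Once support and the $\mathbb{A}^1$-splitting are in place, $\DTS_{\Pi_Q,\dd}$ is obtained simply by restricting $\DTS_{\tilde{Q},\tilde{W},\dd}$ along $\tilde{e}(-\times\{0\})$ and shifting, with perversity following from the splitting.
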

In words, the theorem says that the BPS sheaf is supported on the subspace of $\CC \tilde{Q}$-modules for which all of the generalised eigenvalues of the operators $\rho(\omega_i)$ are the same, and the sheaf is moreover equivariant for the $\mathbb{A}^1$-action that acts by adding scalar multiples of the identity to all of the operators $\rho(\omega_i)$ simultaneously.
\smallbreak
We recall from \cite{preproj} the description of the BPS cohomology of the Jacobi algebra $\Jac(\tilde{Q},\tilde{W})$ in terms of Kac polynomials:
\begin{theorem}
\label{DTKac}
The Poincar\'e polynomials of the cohomological BPS invariants for the QP $(\tilde{Q},\tilde{W})$ satisfy
\begin{equation}
\label{main_poin}
\poin(\DT_{\tilde{Q},\tilde{W},\dd},q^{1/2})=q^{-1/2}\kac_{Q,\dd}(q^{-1})
\end{equation}
where 
\[
\kac_{Q,\dd}(q)=\sum_{i\in\mathbb{Z}_{\geq 0}} \kac_{Q,\dd,i}\:q^i
\]
is the Kac polynomial, counting the number of isomorphism classes of absolutely\footnote{A module is called \textit{absolutely} indecomposable if it remains indecomposable after extending scalars to the algebraic closure $\overline{\mathbb{F}}_q$.} indecomposable $\mathbb{F}_qQ$-modules for $\mathbb{F}_q$ a field of order $q$.  Furthermore the natural mixed Hodge structure on $\DT_{\tilde{Q},\tilde{W},\dd}$ is pure, of Tate type, so that we can write
\begin{equation}
\label{HDT}
\DT^{\hdg}_{\tilde{Q},\tilde{W},\dd}\cong \bigoplus_{i\in\mathbb{Z}_{\geq 0}}(\LLL^{i-1/2})^{\oplus \kac_{Q,\dd,-i}}
\end{equation}
and so
\begin{equation}
\label{Kacg}
\mathfrak{g}^{\hdg}_{\tilde{Q},\tilde{W},\dd}\cong \bigoplus_{i\in\mathbb{Z}_{\geq 0}}(\LLL^{i})^{\oplus \kac_{Q,\dd,-i}}.
\end{equation}
\end{theorem}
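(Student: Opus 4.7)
The plan is to combine three ingredients: dimensional reduction (relating $\Coha_{\tilde Q,\tilde W}$ to Borel--Moore homology of $\Mst(\Pi_Q)$), the cohomological integrality theorem \ref{CIT}, and the generating function computations of Hua--Mozgovoy, with purity controlled via Nakajima's framed quiver varieties.

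First I would apply Lemma \ref{support_theorem}: since $\DTS^{\hdg}_{\tilde Q,\tilde W,\dd}\cong \tilde e_!\bigl(\DTS^{\hdg}_{\Pi_Q,\dd}\boxtimes \ICS^{\hdg}_{\mathbb A^1}\bigr)$, the Künneth formula gives
\begin{equation*}
\DT^{\hdg}_{\tilde Q,\tilde W,\dd}\cong \HO(\Msp_{\dd}(\Pi_Q),\DTS^{\hdg}_{\Pi_Q,\dd})\otimes \HO(\mathbb A^1, \ICS^{\hdg}_{\mathbb A^1}).
\end{equation*}
The $\mathbb A^1$-factor contributes a single copy of $\LLL^{1/2}[-1]$, accounting precisely for the $q^{-1/2}$ shift in \eqref{main_poin}. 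This reduces the problem to computing the BPS cohomology attached to the preprojective algebra itself.

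Next I would match generating series. By dimensional reduction \cite{Da13}, $\Coha_{\tilde Q,\tilde W}$ is isomorphic (as a cohomologically graded vector space, with appropriate shifts) to the Borel--Moore homology of $\Mst(\Pi_Q)$; its generating series was computed by Mozgovoy, via Hua's formula, as an explicit plethystic expression in the Kac polynomials $\kac_{Q,\dd}$. On the other hand Theorem \ref{CIT} expresses the Hall algebra as a plethystic exponential in the BPS cohomology, yielding a generating series identity of the schematic form
\begin{equation*}
\sum_{\dd\in\dvs} \poin(\Coha_{\tilde Q,\tilde W,\dd})\,x^{\dd} \;=\; \Exp\!\left(\frac{q^{1/2}}{1-q}\sum_{0\neq\dd\in\dvs}\poin(\DT_{\tilde Q,\tilde W,\dd})\,x^{\dd}\right).
\end{equation*}
Taking plethystic logarithms of both sides and matching with Hua's formula forces the identity \eqref{main_poin}.

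The main obstacle is purity of $\DT^{\hdg}_{\tilde Q,\tilde W,\dd}$, which upgrades the Poincaré polynomial identity \eqref{main_poin} to the Hodge-theoretic statement \eqref{HDT}. My approach is via the Crawley-Boevey framing trick: after introducing an auxiliary framing vertex, the stability strata of $\Mst_{\dd}(\Pi_Q)$ are related to smooth Nakajima quiver varieties, whose mixed Hodge structures are well known to be pure of Tate type. Combined with the approximation-by-projective-maps property of $\JH$ (and hence the commutation of $\JH_*$ with $\phim{\Tr(\tilde W)}$ stated before Lemma \ref{support_theorem}), purity propagates from the framed moduli cohomology to $\JH_*\phim{\Tr(\tilde W)}\ICS^{\hdg}_{\Mst(\tilde Q)}$, and therefore to its direct summand $\DTS^{\hdg}_{\tilde Q,\tilde W,\dd}$ picked out by the integrality theorem. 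Once purity is established, the virtual and honest Poincaré polynomials coincide, so the calculation of the preceding paragraph promotes to the Hodge-theoretic identity \eqref{HDT}, and \eqref{Kacg} is immediate from the definition $\mathfrak g^{\hdg}_{\tilde Q,\tilde W}\coloneqq \DT^{\hdg}_{\tilde Q,\tilde W}\otimes\LLL^{1/2}$.
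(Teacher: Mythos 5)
This theorem is not proved in the paper at all: it is recalled verbatim from \cite{preproj}, so there is no internal proof to compare against. That said, your sketch is essentially the argument of the cited reference: dimensional reduction plus Hua's formula in Mozgovoy's form to pin down the generating series, the cohomological integrality theorem to extract the BPS summands via a plethystic logarithm, Lemma \ref{support_theorem} to peel off the $\mathbb{A}^1$-factor, and purity via the framing construction and Nakajima quiver varieties. So the route is the right one.

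Three cautions on the details. First, the Hua--Mozgovoy computation is a statement about weight (virtual Poincar\'e) polynomials, so the generating-series match only yields $\chi_{q^{1/2}}(\DT_{\tilde{Q},\tilde{W},\dd})=q^{-1/2}\kac_{Q,\dd}(q^{-1})$; the honest Poincar\'e polynomial identity \eqref{main_poin} is \emph{not} available until purity is established (this is exactly the "any two of three imply the third" remark in the introduction), so purity cannot be relegated to a final upgrade step as the ordering of your write-up suggests --- you do acknowledge this at the end, but the logic should be run in that order. Second, be careful about where purity lives: hypercohomology of a pure complex of mixed Hodge modules over a non-proper base such as $\Msp_{\dd}(Q)$ need not be pure, so "purity propagates to the direct summand $\DTS^{\hdg}_{\tilde{Q},\tilde{W},\dd}$" does not by itself give purity of $\DT^{\hdg}_{\tilde{Q},\tilde{W},\dd}$. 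The argument in \cite{preproj} runs at the level of $\HO_c$ of the stack: the framed smooth spaces approximate $\Mst_{\dd}(\tilde{Q})$ in any bounded range of degrees, their vanishing-cycle cohomology is the pure Tate cohomology of $X(\ff,\dd)\times\mathbb{A}^1$ by the Bott--Morse argument of Proposition \ref{NakSplit}, and purity of the $\Sym$-generators in Theorem \ref{IT1} is then extracted using semisimplicity of the category of pure objects. Third, a normalization slip: with the paper's conventions the $\mathbb{A}^1$-factor contributes $\HO(\mathbb{A}^1,\ICS^{\hdg}_{\mathbb{A}^1})\cong\LLL^{-1/2}$, sitting in cohomological degree $-1$, not $\LLL^{1/2}[-1]$; this is what produces the $q^{-1/2}$ in \eqref{main_poin} and the exponent $i-1/2$ in \eqref{HDT}.
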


\section{Deformed dimensional reduction and proofs of main results}
\subsection{Deformed dimensional reduction}
The main tool in proving Theorem \ref{main_thm} will be \textit{deformed dimensional reduction}, as introduced in joint work with Tudor P\u{a}durariu \cite{DaPa20}.  This is a geometric result about vanishing cycle functors for functions satisfying certain $\mathbb{C}^*$-equivariance properties.  We state the version that we need below.
\begin{theorem}\cite[Thm.1.3]{DaPa20}
\label{DDR_theorem}
Let the algebraic group $G$ act on a variety $X$ and affine space $\mathbb{A}^n$.  Assume that $\mathbb{A}^n$ is also given a $\mathbb{C}^*$-action, with non-negative weights, which commutes with the $G$-action.  Let $\CC^*$ act on $\overline{X}=X\times\mathbb{A}^n$ via the product of the given action on $\mathbb{A}^n$ with the trivial action on $X$.  Let $g$ be a function on $\overline{X}$ that is $G$-invariant and $\CC^*$-semi-invariant, with strictly positive weight.  Assume that we are given a $G\times\CC^*$-equivariant decomposition $\mathbb{A}^n=\mathbb{A}^m\times \mathbb{A}^{n-m}$, and that we can write
\[
g=g_0+\sum_{1\leq j\leq m} g_jt_j
\]
where the functions $g_0,\ldots g_m$ are pulled back from $X\times \mathbb{A}^{n-m}$ and $t_1,\ldots,t_m$ are a system of coordinates for $\mathbb{A}^m$.  Let $Z\subset X\times \mathbb{A}^{n-m}$ be the vanishing locus of the functions $g_1,\ldots,g_m$.  Then $Z$ is $G$-invariant.  Set $\overline{Z}=Z\times\mathbb{A}^{m}\subset \overline{X}$.  We denote by 
\begin{align*}
\pi\colon &\overline{X}\rightarrow X\\
q\colon&X\times\mathbb{A}^{n-m}\rightarrow X
\end{align*}
the natural projections.  Then the natural transformation
\begin{equation}
\label{CU}
\pi_!\phin{g}\mathbb{Q}_{\overline{X}/G}\rightarrow \pi_!\phin{g_0}\mathbb{Q}_{\overline{Z}/G}\cong q_!\phin{g_0}\mathbb{Q}_{Z/G}[-2m]
\end{equation}
is an isomorphism.
\smallbreak
Since the functor $\rat$ is faithful, the same statement is true at the level of (monodromic) mixed Hodge modules: the natural transformation
\[
\pi_!\phim{g}\underline{\mathbb{Q}}_{\overline{X}/G}\rightarrow \pi_!\phim{g_0}\underline{\mathbb{Q}}_{\overline{Z}/G}\cong q_!\phim{g_0}\underline{\mathbb{Q}}_{Z/G}\otimes \LLL^{m}
\]
is an isomorphism in\footnote{The derived category of monodromic mixed Hodge modules on this global quotient stack is defined following e.g. \cite{emhm}.} $\Db(\MMHM(X/G))$.
\end{theorem}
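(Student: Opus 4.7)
The plan is to reduce the deformed dimensional reduction theorem to the classical (undeformed) version, which corresponds to the special case $g_0=0$. In the undeformed case, the vanishing cycle sheaf $\phin{g}\mathbb{Q}_{\overline{X}/G}$ is controlled fiberwise: on each fiber $\{y\}\times\mathbb{A}^m$ of $\pi$, the function $g$ restricts to a linear form, which is either identically zero (when $y\in Z$) or has no critical points (when $y\notin Z$); the $\mathbb{C}^*$-equivariance promotes this fiberwise picture to a global isomorphism. I will take that case as given and attempt to reduce to it.

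First I would write down the natural factorization via the closed embedding $\kappa\colon \overline{Z}\hookrightarrow\overline{X}$:
\[
\pi_!\phin{g}\mathbb{Q}_{\overline{X}/G}\longrightarrow \pi_!\kappa_*\kappa^*\phin{g}\mathbb{Q}_{\overline{X}/G}\cong \pi_!\kappa_*\phin{g\lvert_{\overline{Z}}}\mathbb{Q}_{\overline{Z}/G}=\pi_!\kappa_*\phin{g_0}\mathbb{Q}_{\overline{Z}/G},
\]
using that $g_j\lvert_Z=0$ forces $g\lvert_{\overline{Z}}=g_0$ (independent of the $t_j$). Since $\overline{Z}=Z\times\mathbb{A}^m$ and $g_0$ is pulled back from $Z$, proper base change combined with the Künneth-type computation $\HO_c(\mathbb{A}^m,\mathbb{Q})\cong \mathbb{Q}[-2m]$ identifies the target with $q_!\phin{g_0}\mathbb{Q}_{Z/G}[-2m]$, matching the right-hand side of \eqref{CU}.

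The heart of the argument is then to show that the map displayed above is an isomorphism, equivalently that $\pi_!\phin{g}\mathbb{Q}_{U/G}=0$ where $U=\overline{X}\setminus\overline{Z}$. On $U$, at every point some $g_j$ is nonzero, so locally one may perform a change of coordinates replacing $t_j$ by $s\coloneqq g_0+\sum_{k}g_kt_k$ (valid where $g_j\neq 0$): with respect to this new coordinate, $g$ becomes the linear function $s$, which has no critical points, so that $\phin{g}\mathbb{Q}_U=0$ locally. To globalise, the $\mathbb{C}^*$-equivariance with strictly positive weight on $g$ is essential: it implies $\phin{g}\mathbb{Q}_{\overline{X}/G}$ carries a monodromic structure, and the non-negative $\mathbb{C}^*$-weights on $\mathbb{A}^n$ let one use a Bia\l ynicki-Birula / contraction-type argument to pass from the local vanishing on each $\mathbb{C}^*$-orbit in $U$ to a genuine sheaf-theoretic vanishing after $\pi_!$.

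The main obstacle will be step three: the local change-of-coordinates argument is not $G$-equivariant or even globally well-defined, so it must be replaced by a genuine equivariant statement about vanishing cycles of a $\mathbb{C}^*$-semi-invariant function of strictly positive weight. I expect that this is best handled by filtering $U$ by the loci where prescribed subsets of $\{g_j\}$ vanish and inducting on the stratification, or alternatively by passing to the specialization along $\mathbb{C}^*$ to reduce to the fixed locus (where, by the positivity of weights, everything is forced into $\{t=0\}$ and thus into $\overline{Z}$). Once the perverse-sheaf statement is established, the lift to $\Db(\MMHM(X/G))$ is essentially formal, since each of the ingredients - proper base change, the Künneth isomorphism, the $\mathbb{C}^*$-contraction, and the monodromic vanishing-cycle functor $\phim{-}$ - has a known mixed Hodge module refinement, and $\rat$ is faithful.
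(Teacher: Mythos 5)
First, a caveat: the paper does not prove this statement — it is imported verbatim from \cite[Thm.~1.3]{DaPa20} — so there is no internal proof to compare against; I am judging your argument against the actual proof in that reference.

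There is a genuine gap, and it sits in the innocuous-looking middle isomorphism of your first display. Vanishing cycles do \emph{not} commute with $*$-restriction along a closed embedding: there is only a natural map $\kappa^*\phin{g}\rightarrow\phin{g\circ\kappa}\kappa^*$, and here it is very far from an isomorphism. Already in the basic example $X=\mathbb{A}^1_u$, $m=n=1$, $g=ut$, the complex $\kappa^*\phin{g}\mathbb{Q}_{\mathbb{A}^2}$ is a skyscraper at the origin of $\overline{Z}=\{0\}\times\mathbb{A}^1_t$ (since $\phin{g}\mathbb{Q}_{\mathbb{A}^2}$ is supported on $\Crit(ut)$), whereas $\phin{g_0}\mathbb{Q}_{\overline{Z}}$ is constant along all of $\overline{Z}$; the two only agree after applying $\pi_!$, and proving that they do \emph{is} the dimensional reduction theorem. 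By contrast, the step you single out as the heart — vanishing of $\pi_!\phin{g}\mathbb{Q}_{U/G}$ on $U=\overline{X}\setminus\overline{Z}$ — is automatic and needs no equivariance, stratification or Bia\l ynicki-Birula argument: since $\partial g/\partial t_j=g_j$ we have $\Crit(g)\subset\overline{Z}$, the support of vanishing cycles lies in the critical locus, and restriction to an \emph{open} subset does commute with $\phin{}$. So your reduction proves only the trivial half, silently assumes the nontrivial half, and in particular never actually invokes the undeformed case you set out to reduce to.

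For comparison, the proof in \cite{DaPa20} uses the positive-weight $\mathbb{C}^*$-semi-invariance of $g$ together with the non-negative weights on $\mathbb{A}^n$ to express $\pi_!\phin{g}$ as a cone on a comparison map between the compactly supported cohomology of the zero fibre and of a general fibre of $g$ (this is where the equivariance hypotheses genuinely enter), and then compares the fibres of $g$ with those of $g_0\lvert_{\overline{Z}}$ directly: over the complement of $Z$ the nonzero fibres of $g$ are corank-one affine sub-bundles of $\overline{X}\rightarrow X\times\mathbb{A}^{n-m}$, and homotopy invariance of compactly supported cohomology of affine bundles produces exactly the shift $[-2m]$. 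If you want to rescue your outline, the step that must be supplied is precisely that the canonical map $\pi_!\phin{g}\mathbb{Q}_{\overline{X}/G}\rightarrow \pi_!\kappa_*\phin{g_0}\mathbb{Q}_{\overline{Z}/G}$ — after integration along the $\mathbb{A}^m$-fibres, not at the level of sheaves on $\overline{Z}$ — is an isomorphism; that is where all the work lives.
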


\subsection{Proof of Theorem \ref{main_thm}}
\label{MPS}
We proceed by applying deformed dimensional reduction to the function $\Tr(\tilde{W}^{\mu})$.  
\begin{proof}[Proof of Theorem \ref{main_thm}]Given a quiver $Q$ we denote by $Q^+$ the quiver obtained by adding a loop $\omega_i$ at each vertex $i\in Q_0$, or equivalently the quiver obtained by removing all of the arrows $a^*$ from $\tilde{Q}$.  We denote by $\mathfrak{I}(Q)$ the stack of pairs $(\rho,f)$, consisting of a $\CC Q$-module $\rho$ and an endomorphism $f\in \Hom_{\CC Q}(\rho,\rho)$.  Then we consider the commutative diagram
\[
\xymatrix{
&\mathfrak{I}(Q)\ar@{^{(}->}[d]^i\\
\Mst(\tilde{Q})\ar[d]^{\tilde{\JH}}\ar[r]^p&\Mst(Q^+)\ar[d]^{\JH^+}\ar[r]^-q&\Mst(Q)\ar[d]^{\JH}\\
\Msp(\tilde{Q})\ar[r]^{p'}&\Msp(Q^+)\ar[r]^-{q'}&\Msp(Q)\ar[dr]^{\tau}\\
\Msp(\Pi_Q)\times\mathbb{A}^1\ar@{_{(}->}[u]_{\tilde{e}}\ar[r]^h&\Msp(Q)\times\mathbb{A}^1\ar@{_{(}->}[u]_e&&\pt.
}
\]
In the above diagram, $i$ is the natural embedding of stacks, sending a pair $(\rho,f)$ to the $\CC Q^+$-module $\rho'$ for which the underlying $\CC Q$-module is $\rho$, and the action of the loops $\omega_i$ is given by $f$.  All of the horizontal arrows are the natural forgetful maps.  We set 
\begin{align*}
\pi=&qp\\
\pi'=&q'p'.
\end{align*}
We denote by $\Omega$ the quiver containing the same vertices as $Q$, and for which the only arrows are the loops $\omega_i$.  We denote by $Q^{\op}$ the quiver containing the same vertices as $Q$, and only the arrows $a^*$ for $a\in Q_1$, i.e. $Q^{\op}$ is the opposite quiver to $Q$.  In the notation of Theorem \ref{DDR_theorem} we set 
\begin{align}
\label{3waysplit}
X=&\mathbb{A}_{\dd}(Q)\\ \nonumber
\mathbb{A}^m=&\mathbb{A}_{\dd}(Q^{\op})\\ \nonumber
\mathbb{A}^{n-m}=&\mathbb{A}_{\dd}(\Omega)\\ \nonumber
G=&\Gl_{\dd}.
\end{align}
We let $\CC^*$ act on $\AA^m$ and $\AA^{n-m}$ with weight one.  
\smallbreak
We set 
\[
L_{\mu}=\frac{1}{2}\sum_{i\in Q_0}\mu_i\omega_i^2
\]
so $\tilde{W}^{\mu}=\tilde{W}+L_{\mu}$.  Then $\Tr(\tilde{W}+L_{\mu})$ satisfies the conditions of Theorem \ref{DDR_theorem}, for example it is $\CC^*$ semi-invariant with weight two.  In the notation of that theorem $Z\subset X\times \mathbb{A}^{n-m}=\mathbb{A}_{\dd}(Q^+)$ is the locus containing those $\CC Q^+$-modules $\rho$ such that the endomorphisms $\rho(\omega_i)$ determine an endomorphism of the underlying $\CC Q$-module of $\rho$.  It follows that $Z/G=\mathfrak{Z}(G)$.  By Theorem \ref{DDR_theorem} there is an isomorphism\footnote{Note that the shift $2\sum_{a\in Q_1}\dd_{s(a)}\dd_{t(a)}$ appearing in the definition of the intersection complex is equal to the $2m$ appearing in \eqref{CU}.}
\begin{align}
\label{DR1} \pi_!\phin{\Tr(\tilde{W}^{\mu})}\ICS_{\Mst(\tilde{Q})}\cong& q_!\phin{\Tr(L_{\mu})}\mathbb{Q}_{\mathfrak{Z}(Q)}.
\end{align}
If instead we set
\begin{align*}
X=&\mathbb{A}_{\dd}(Q^+)\\
\mathbb{A}^m=&\mathbb{A}_{\dd}(\Omega)
\end{align*}
set $n=m$, and again let $\CC^*$ act on $\mathbb{A}^m$ with weight one, then the function $\Tr(\tilde{W})$ still satisfies the conditions of Theorem \ref{DDR_theorem} (now it is semi-invariant with weight one).  So instead we arrive at the isomorphism
\begin{align}
\label{DR2} p_!\phin{\Tr(\tilde{W})}\ICS_{\Mst(\tilde{Q})}\cong& \mathbb{Q}_{\mathfrak{I}(Q)}.
\end{align}

Since $\JH^+$ is approximated by proper maps, there is a natural isomorphism (see \cite[Sec.4.1]{QEAs})
\begin{equation}
\label{ncomm}
\JH^+_!\phin{\Tr(L_{\mu})}\mathbb{Q}_{\mathfrak{I}(Q)}\cong \phin{\Tr(L_{\mu})}\JH^+_!\mathbb{Q}_{\mathfrak{I}(Q)}.
\end{equation}

Combining all of the above we can write\footnote{For the final isomorphism, see \cite[Prop.3.11]{QEAs} for the required compatibility with the symmetrizing morphism.}
\begin{align*}
\pi'_!\tilde{\JH}_!\phin{\Tr(\tilde{W}^{\mu})}\ICS_{\Mst(\tilde{Q})}\cong & \JH_!\pi_!\phin{\Tr(\tilde{W}^{\mu})}\ICS_{\Mst(\tilde{Q})}&
\\\cong&\JH_!q_!\phin{\Tr(L_{\mu})}\mathbb{Q}_{\mathfrak{I}(Q)} &\textrm{by }\eqref{DR1}\\
\cong &\JH_!q_!\phin{\Tr(L_{\mu})}p_!\phin{\Tr(\tilde{W})}\ICS_{\Mst(\tilde{Q})}&\textrm{by }\eqref{DR2}\\
\cong &q'_!\JH^+_!\phin{\Tr(L_{\mu})}p_!\phin{\Tr(\tilde{W})}\ICS_{\Mst(\tilde{Q})}\\
\cong &q'_!\phin{\Tr(L_{\mu})}\JH^+_!p_!\phin{\Tr(\tilde{W})}\ICS_{\Mst(\tilde{Q})}&\textrm{by } \eqref{ncomm}\\
\cong &q'_!\phin{\Tr(L_{\mu})}p'_!\tilde{\JH}_!\phin{\Tr(\tilde{W})}\ICS_{\Mst(\tilde{Q})}\\
\cong &q'_!\phin{\Tr(L_{\mu})}p'_!\Sym_{\oplus}\left(\bigoplus_{\dd\in\dvs\setminus 0} \DTS_{\tilde{Q},\tilde{W},\dd}\otimes\HO_c(\pt/\CC^*)
_{\vir}\right)&\textrm{by Theorem }\ref{IT1}\\
\cong &q'_!\phin{\Tr(L_{\mu})}\Sym_{\oplus}\left(\bigoplus_{\dd\in\dvs\setminus 0} p'_!\DTS_{\tilde{Q},\tilde{W},\dd}\otimes\HO_c(\pt/\CC^*)
_{\vir}\right)&\textrm{since }p'\textrm{ is a monoid map}\\
\cong &q'_!\Sym_{\oplus}\left(\bigoplus_{\dd\in\dvs\setminus 0} \phin{\Tr(L_{\mu})}p'_!\DTS_{\tilde{Q},\tilde{W},\dd}\otimes\HO_c(\pt/\CC^*)
_{\vir}\right)&\textrm{by Thom--Sebastiani.}
\end{align*}
By Lemma \ref{support_theorem} we can write
\begin{equation}
\label{tsev}
p'_!\DTS_{\tilde{Q},\tilde{W},\dd}\cong e_!(\mathcal{F}\boxtimes \ICS_{\mathbb{A}^1})
\end{equation}
where
\[
\mathcal{F}=(\Msp(\Pi_Q)\rightarrow \Msp(Q))_!\DTS_{\Pi_Q,\dd}.
\]
Writing $f=\Tr(L_{\mu})\lvert_{\Msp_{\dd}(Q)\times\mathbb{A}^1}$, we have
\begin{align*}
f\colon &\Msp_{\dd}(Q)\times\mathbb{A}^1\rightarrow\CC\\
&(\rho,t)\mapsto \frac{1}{2}(\mu\cdot \dd)t^2.
\end{align*}
It follows from \eqref{tsev} that
\[
\phin{\Tr(L_{\mu})}p'_!\DTS_{\tilde{Q},\tilde{W},\dd}\cong\begin{cases}e_!(\mathcal{F}\boxtimes \mathbb{Q}_0)&\textrm{if }\mu\cdot \dd\neq 0\\
 e_!(\mathcal{F}\boxtimes \ICS_{\mathbb{A}^1})&\textrm{if }\mu\cdot \dd= 0,
\end{cases}
\]
so that 
\[
\HO_c(\Msp_{\dd}(Q^+),\phin{\Tr(L_{\mu})}p'_!\DTS_{\tilde{Q},\tilde{W},\dd})\cong \HO_c(\Msp_{\dd}(\Pi_Q),\DTS_{\Pi_Q,\dd})[-g(\dd)]
\]
where we define
\[
g(\dd)=\begin{cases} 1&\textrm{if }\mu\cdot\dd=0\\
0&\textrm{if }\mu\cdot \dd\neq 0.
\end{cases}
\]
Finally we deduce that 
\begin{align}
\HO_c(\Mst(\tilde{Q}),\phin{\Tr(\tilde{W}^{\mu})}\ICS_{\Mst(\tilde{Q})})=&\tau_!\pi'_!\tilde{\JH}_!\phin{\Tr(\tilde{W}^{\mu})}\ICS_{\Mst(\tilde{Q})}\nonumber\\
\cong &\Sym\left(\bigoplus_{\dd\in\dvs\setminus 0}\left(\HO_c(\Msp_{\dd}(\Pi_Q),\DTS_{\Pi_Q,\dd})\otimes \HO_c(\pt/\mathbb{C}^*)_{\vir} \right)[-g(\dd)]\right).
\label{firstround}
\end{align}
On the other hand, by Theorem \ref{IT1} again, we have
\begin{equation}
\HO_c(\Mst(\tilde{Q}),\phin{\Tr(\tilde{W}^{\mu})}\ICS_{\Mst(\tilde{Q})})\cong \Sym\left(\bigoplus_{\dd\in\dvs\setminus 0}\DT^{\vee}_{\tilde{Q},\tilde{W}^{\mu},\dd}\otimes \HO_c(\pt/\CC^*)_{\vir}\right).
\label{secondround}
\end{equation}
Comparing \eqref{firstround} and \eqref{secondround} we find
\begin{align*}
\DT_{\tilde{Q},\tilde{W}^{\mu},\dd}\cong &(\HO_c(\Msp_{\dd}(\Pi_Q),\DTS_{\Pi_Q,\dd}))^{\vee}[g(\dd)]\\
\cong &(\HO_c(\Msp_{\dd}(\tilde{Q}),\DTS_{\tilde{Q},\tilde{W},\dd})[1])^{\vee}[g(\dd)]& \textrm{by Theorem \ref{support_theorem}}\\
\cong &\HO_c(\Msp_{\dd}(\tilde{Q}),\DTS_{\tilde{Q},\tilde{W},\dd})^{\vee}[g(\dd)-1]\\
\cong &\DT_{\tilde{Q},\tilde{W},\dd}[g(\dd)-1]
\end{align*}
and now \eqref{main_poin} follows from Theorem \ref{DTKac}.
\smallbreak
Performing the same calculations in the category of monodromic mixed Hodge structures, isomorphisms \eqref{firstround} and \eqref{secondround} yield the isomorphism of $\dvs$-graded complexes of monodromic mixed Hodge structures
\begin{equation}
\label{thirdround}
\Sym\left(\bigoplus_{\dd\in\dvs\setminus 0}\DT^{\hdg,\vee}_{\tilde{Q},\tilde{W},\dd}\otimes \LLL^{(g(\dd)-1)/2}\otimes \HO_c(\pt/\mathbb{C}^*)_{\vir} \right)\cong \Sym\left(\bigoplus_{\dd\in\dvs\setminus 0}\DT^{\hdg,\vee}_{\tilde{Q},\tilde{W}^{\mu},\dd}\otimes \HO_c(\pt/\CC^*)_{\vir}\right)
\end{equation}
where
\[
\HO_c(\pt/\CC^*)_{\vir}=\bigoplus_{i\geq 0}\LLL^{-1/2-i}.
\]
On the other hand, by Theorem \ref{purity_thm} the mixed Hodge structure $\DT^{\hdg}_{\Pi_Q,\dd}$ is pure, so that $\DT^{\hdg,\vee}_{\Pi_Q,\dd}\otimes\LLL^{n/2}$ is pure for all $n\in\mathbb{Z}$.  It follows that both sides of \eqref{thirdround} are pure.  The isomorphism
\[
\DT^{\hdg}_{\tilde{Q},\tilde{W},\dd}\otimes \LLL^{(g(\dd)-1)/2}\cong \DT^{\hdg}_{\tilde{Q},\tilde{W}^{\mu},\dd}
\]
then follows from \eqref{thirdround} and semisimplicity of the category of pure monodromic mixed Hodge structures as in \cite[Cor.7.1]{DaPa20}. \end{proof}
\begin{example}
To recover the example that we started the paper with, consider the quiver $Q^{(0)}$ with one vertex and no arrows.  We label the unique arrow of the one-loop quiver $Q^{(1)}=\widetilde{Q^{(0)}}$ by $\omega_0$.  For $\mu=\mu_0e_0$ to be generic we just have to pick $\mu_0\neq 0$.  Then we find
\begin{align*}
\Jac(Q^{(1)},\tilde{W}^{\mu})=&\Jac(Q^{(1)},\omega_0^2)\\
\cong& \CC Q^{(0)}
\end{align*}
and so the fermionic version of $\Coha_{\widetilde{Q^{(0)}},\tilde{W}}=\Coha_{Q^{(1)}}$ is indeed $\Coha_{Q^{(0)}}$.
\end{example}
\begin{example}
\label{nccf}
We return once more to the noncommutative conifold.  As observed in Example \ref{ConEx}, this algebra is obtained by setting $\mu=(1,-1)$ for the quiver \eqref{Qdef} and considering $\Jac(\tilde{Q},\tilde{W}^{\mu})$.  The cohomological BPS invariants of the QP $(\tilde{Q},\tilde{W})$ are given in \eqref{h0DT}, while the cohomological BPS invariants of the noncommutative conifold are given in \eqref{h1DT}.  Comparing the two, there is a cohomological shift between the respective $(m,n)$th cohomological BPS invariants precisely if $m\neq n$, i.e. precisely if $\mu\cdot(m,n)\neq 0$.
\smallbreak
Recall from the introduction that the BPS invariants for the resolved conifold are only a \textit{partially} fermionized version of the BPS invariants of $Y_0\times\mathbb{A}^1$.  In view of the main result, we see that the generic deformation in the commutative algebraic geometry context corresponds to the deformation $\mu=(1,-1)$.  To fully fermionize the DT theory we are obliged to work in the fully noncommutative\footnote{I.e. with a Jacobi algebra that is not derived equivalent to a threefold.} context provided by generic $\mu$.  Note that deformations within algebraic geometry of the Kleinian singularity were parameterised by $\mathfrak{h}$, the Cartan subalgebra of the (reduced) McKay graph $\Gamma'$, whereas noncommutative deformations are parameterised by the Cartan subalgebra of the full McKay graph.
\end{example}
\begin{remark}
\label{changeN}
Along the course of the proof of Theorem \ref{main_thm} we have shown that
\[
\pi'_*\DTS_{\tilde{Q},\tilde{W}^{\mu},\dd}\cong \begin{cases} \pi'_*\DTS_{\tilde{Q},\tilde{W}}[-1] &\textrm{if }\mu\cdot \dd\neq 0\\
\pi'_*\DTS_{\tilde{Q},\tilde{W}}&\textrm{otherwise.}\end{cases}
\]
With a little effort, one may lift this statement to the level of monodromic mixed Hodge module complexes.
\end{remark}

\subsection{Proof of Theorem \ref{defBM}}
\label{bonus_sec}
Given a quiver $Q$, an element $\mu\in R$, and a number $n\in\mathbb{Z}_{\geq 1}$, define 
\[
\tilde{W}^{\mu}_n=\tilde{W}+\frac{1}{n}\mu\omega^n.
\]
Then in the decomposition \eqref{3waysplit}, we let $\CC^*$ act with weight $1$ on $\AA_{\dd}(\Omega)$, weight $n-1$ on $\AA_{\dd}(Q^{\opp})$, and trivially on $\AA_{\dd}(Q)$, so $\Tr(\tilde{W}^{\mu}_{\dd})$ satisfies the conditions of Theorem \ref{DDR_theorem} (now it is a weight $n$ function).  Then the argument of \S \ref{MPS} gives that
\begin{equation}
\label{general_n}
\BPS^{\hdg}_{\tilde{Q},\tilde{W}^{\mu}_n,\dd}\cong \BPS^{\hdg}_{\tilde{Q},\tilde{W},\dd}\otimes \LLL^{1/2}\otimes \HO(\AA^1,\phim{(\mu\cdot\dd) x^n}\ICS_{\AA^1}).
\end{equation}
Theorem \ref{main_thm} follows from the special case $n=2$ and Theorem \ref{DTKac}, observing that $\HO(\AA^1,\phim{x^2}\underline{\mathbb{Q}}_{\AA^1})=\LLL^{1/2}$.

In this section we consider instead the special case $n=1$, i.e. we consider the quiver $\tilde{Q}$ with the potential
\[
\tilde{W}_1^{\mu}=\sum_{a\in Q_1}[a,a^*]+\mu\omega.
\]
This potential is linear in the loops $\omega_i$, so that the following proposition is a straightforward application of (undeformed) dimensional reduction:
\begin{proposition}
\label{dppdr}
There is an isomorphism in the derived category of complexes of mixed Hodge modules
\[
\pi_!\phim{\Tr(\tilde{W}_1^{\mu})}\underline{\mathbb{Q}}_{\Mst_{\dd}(\tilde{Q})}\rightarrow \underline{\mathbb{Q}}_{\Mst_{\dd}(\Pi_{Q,\mu})}\otimes\LLL^{\dd\cdot\dd}.
\]
\end{proposition}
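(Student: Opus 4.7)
The plan is to apply deformed dimensional reduction (Theorem \ref{DDR_theorem}) directly, in the degenerate setting where there is no residual potential after reduction --- i.e.\ where the full $\Tr(\tilde{W}_1^{\mu})$ is linear in the loop variables that we integrate out.

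First I would rewrite the potential to expose this linearity. Denote by $\mu_{Q,i}(\rho) = e_i\bigl(\sum_{a\in Q_1}[\rho(a),\rho(a^*)]\bigr)e_i$ the vertex-$i$ component of the $\overline{Q}$-moment map. Cyclic invariance of trace then yields
\[
\Tr(\tilde{W}_1^{\mu})(\rho) = \sum_{i\in Q_0}\Tr\!\left(\rho(\omega_i)\cdot\bigl(\mu_{Q,i}(\rho) + \mu_i I_{\dd_i}\bigr)\right),
\]
which is manifestly linear in the matrix entries of $\rho(\omega_i)$, with coefficients regular functions on $\AA_{\dd}(\overline{Q})$.

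Next I would apply Theorem \ref{DDR_theorem} with $X = \AA_{\dd}(\overline{Q})$, $G = \Gl_{\dd}$, $\AA^{n-m} = \mathrm{pt}$ (so $g_0 = 0$), and $\AA^m = \AA_{\dd}(\Omega) \cong \prod_i \mathrm{Mat}_{\dd_i\times\dd_i}$ equipped with the weight-one scaling $\CC^*$-action. The coordinates $t_j$ are the matrix entries of the $\rho(\omega_i)$, and the corresponding linear coefficients $g_j$ are the matrix entries of $\mu_{Q,i}(\rho)+\mu_i I_{\dd_i}$. Cyclic invariance guarantees $\Gl_{\dd}$-invariance of the potential, and by construction it is $\CC^*$-semi-invariant of strictly positive weight $1$, so the hypotheses of the theorem hold. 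By the very definition of the deformed preprojective algebra the simultaneous vanishing locus of the $g_j$ in $\AA_{\dd}(\overline{Q})$ is exactly $\AA_{\dd}(\Pi_{Q,\mu})$, and passing to the $\Gl_{\dd}$-quotient gives $Z/G = \Mst_{\dd}(\Pi_{Q,\mu})$.

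Theorem \ref{DDR_theorem} then produces, in the Hodge-theoretic version, an isomorphism
\[
\pi_!\phim{\Tr(\tilde{W}_1^{\mu})}\underline{\mathbb{Q}}_{\Mst_{\dd}(\tilde{Q})} \;\cong\; \phim{0}\underline{\mathbb{Q}}_{\Mst_{\dd}(\Pi_{Q,\mu})}\otimes\LLL^{m},
\]
where $m = \dim \AA_{\dd}(\Omega) = \sum_{i\in Q_0}\dd_i^2 = \dd\cdot\dd$; since vanishing cycles of the zero function acts as the identity on constant monodromic sheaves in the paper's normalisation, the right-hand side simplifies to $\underline{\mathbb{Q}}_{\Mst_{\dd}(\Pi_{Q,\mu})}\otimes\LLL^{\dd\cdot\dd}$, as required. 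There is no real obstacle in this argument: because the potential is already linear in the loops there is no residual $g_0$ left on $Z$ (contrast with the proof of Theorem \ref{main_thm}, where the quadratic term $L_{\mu}$ survives the reduction and carries the nontrivial Hodge content), and the full loop direction simply contributes the Lefschetz twist $\LLL^{\dd\cdot\dd}$.
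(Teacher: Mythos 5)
Your proposal is correct and follows essentially the same route as the paper: both apply Theorem \ref{DDR_theorem} with $X=\AA_{\dd}(\overline{Q})$, $G=\Gl_{\dd}$, $n=m$, $\AA^m=\AA_{\dd}(\Omega)$ with the weight-one scaling action, identify $Z/G$ with $\Mst_{\dd}(\Pi_{Q,\mu})$, use $\phim{g_0}\cong\id$ for $g_0=0$, and read off the twist from $\dim\AA_{\dd}(\Omega)=\dd\cdot\dd$. The only difference is that you spell out the linearity of $\Tr(\tilde{W}_1^{\mu})$ in the loop coordinates explicitly, which the paper leaves implicit.
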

\begin{proof}
In the setup of Theorem \ref{DDR_theorem}, we put $X=\mathbb{A}_{\dd}(\overline{Q})$, $n=m$, $\mathbb{A}^m=\mathbb{A}_{\dd}(\Omega)$ and $G=\Gl_{\dd}$.  Then $g_0$ is the zero function, so that there is a natural isomorphism $\phim{g_0}\rightarrow \id$, and the result follows from Theorem \ref{DDR_theorem} and $\dim(\mathbb{A}_{\dd}(\Omega))=\dd\cdot\dd$.
\end{proof}
\begin{proof}[Proof of Theorem \ref{defBM}]
We have isomorphisms of $\mathbb{N}^{Q_0}$-graded mixed Hodge structures
\begin{align*}
\bigoplus_{\dd\in \mathbb{N}^{Q_0}}\HO_c\!\left(\Mst_{\dd}(\Pi_{Q,\mu}),\mathbb{Q}\right)\otimes \LLL^{\chi_Q(\dd,\dd)}\cong &\bigoplus_{\dd\in\mathbb{N}^{Q_0}}\HO_c\left(\Mst_{\dd}(\tilde{Q}),\phim{\Tr(W_1^{\mu})}\ICS_{\Mst_{\dd}(\tilde{Q})}\right)\\
\cong&\Sym\left(\bigoplus_{\mathbb{N}^{Q_0}\ni \dd\neq 0}\DT^{\hdg,\vee}_{\tilde{Q},W_1^{\mu},\dd}\otimes\HO_c(\pt/\mathbb{C}^*)_{\vir}\right)
\end{align*}
where the first isomorphism is dimensional reduction (as in Proposition \ref{dppdr}) and the second is the cohomological integrality theorem.  So the theorem follows from \eqref{HDT} and the claim that
\[
\DT^{\hdg}_{\tilde{Q},W_1^{\mu},\dd}\cong\begin{cases} \DT^{\hdg}_{\tilde{Q},\tilde{W},\dd} &\textrm{if }\dd\cdot\mu=0\\
0& \textrm{otherwise.}\end{cases}
\]
This follows from \eqref{general_n} and the observation that for $\lambda\in\mathbb{C}$
\[
\HO\!\left(\mathbb{A}^1,\phim{\lambda x}\ICS_{\mathbb{A}^1}\right)\cong\begin{cases} \LLL^{-1/2}& \textrm{if }\lambda=0\\
0&\textrm{if }\lambda\neq 0.\end{cases}
\]
\end{proof}

Assume that the dimension vector $\dd$ is indivisible, meaning that there is no $n\in \mathbb{Z}_{\geq 2}$ such that $\frac{1}{n}\dd\in\mathbb{N}^{Q_0}$.  Assume also that $\mu$ is chosen to be generic, subject to the constraint that $\dd\cdot\mu=0$.  Equivalently, $\dd$ and $\mu$ are chosen so that $\dd'\cdot \mu=0$ implies that $\dd'$ is an integer multiple of $\dd$.  Then \eqref{VB} simplifies to
\begin{equation}
\label{VBs}
\bigoplus_{n\in\mathbb{Z}_{\geq 0}}\HO_c\!\left(\Mst_{n \dd}(\Pi_{Q,\mu}),\mathbb{Q}\right)\otimes \LLL^{n^2\chi_Q(\dd,\dd)}\cong \Sym\left(\bigoplus_{n\in\mathbb{Z}_{>0}}\DT_{\tilde{Q},\tilde{W},n\dd}^{\hdg,\vee}\otimes\HO_c(\pt/\mathbb{C}^*,\mathbb{Q})_{\vir}\right).
\end{equation}
Our assumptions imply that $\Msp_{\dd}(\Pi_Q)$ is a fine moduli scheme, and $\Mst_{\dd}(\Pi_Q)$ is a trivial $\mathrm{B}\mathbb{C}^*$-gerbe over it, meaning that
\[
\HO_c\!\left(\Mst_{\dd}(\Pi_{Q,\mu}),\mathbb{Q}\right)\cong \HO_c(\Msp_{\dd}(\Pi_Q),\mathbb{Q})\otimes \HO_c(\pt/\mathbb{C}^*,\mathbb{Q}).
\]
Recall that there is an isomorphism
\[
\HO_c(\pt/\mathbb{C}^*,\mathbb{Q})\cong\bigoplus_{i\geq 0}\LLL^{-i-1}.
\]
From the $n=1$ piece of \eqref{VBs} we deduce the following slightly stronger version of a result of Crawley--Boevey and Van den Bergh \cite{CBVdB04}:
\begin{corollary}
Let $\dd$ be indivisible, and $\mu$ be generic.  Then there is an isomorphism of mixed Hodge structures
\[
\HO_c(\Msp_{\dd}(\Pi_{Q,\mu}),\mathbb{Q})\cong \bigoplus_{i\geq 0}(\LLL^{1+i- \chi_Q(\dd,\dd)})^{\oplus \kac_{Q,\dd,i}}.
\]
In particular, $\HO_c(\Msp_{\dd}(\Pi_{Q,\mu}),\mathbb{Q})$ is pure of Tate type.
\end{corollary}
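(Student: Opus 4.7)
The plan is to read off the corollary from the graded piece of the specialization \eqref{VBs} living in $\mathbb{N}^{Q_0}$-degree exactly $\dd$. Since $\dd$ is indivisible, the only way to partition $\dd$ into a sum of strictly positive multiples of $\dd$ is the trivial one with a single summand, so the only contribution of the symmetric algebra on the right-hand side of \eqref{VBs} in $\mathbb{N}^{Q_0}$-degree $\dd$ is its linear term. Extracting the $\dd$-graded piece on both sides then gives
\[
\HO_c\!\left(\Mst_{\dd}(\Pi_{Q,\mu}),\mathbb{Q}\right)\otimes \LLL^{\chi_Q(\dd,\dd)}\cong \DT^{\hdg,\vee}_{\tilde{Q},\tilde{W},\dd}\otimes\HO_c(\pt/\CC^*,\mathbb{Q})_{\vir}.
\]

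Next I would invoke the gerbe description appearing just before the corollary's statement: under the indivisibility and genericity assumptions, $\Msp_{\dd}(\Pi_{Q,\mu})$ is a fine moduli scheme and $\Mst_{\dd}(\Pi_{Q,\mu})\to \Msp_{\dd}(\Pi_{Q,\mu})$ is a trivial $\mathrm{B}\CC^*$-gerbe, so by K\"unneth
\[
\HO_c\!\left(\Mst_{\dd}(\Pi_{Q,\mu}),\mathbb{Q}\right)\cong \HO_c\!\left(\Msp_{\dd}(\Pi_{Q,\mu}),\mathbb{Q}\right)\otimes \HO_c(\pt/\CC^*,\mathbb{Q}).
\]
Combined with the identity $\HO_c(\pt/\CC^*)_{\vir}\cong \HO_c(\pt/\CC^*)\otimes \LLL^{1/2}$, and after cancelling the common factor $\HO_c(\pt/\CC^*)$ from both sides, the previous display rearranges to
\[
\HO_c\!\left(\Msp_{\dd}(\Pi_{Q,\mu}),\mathbb{Q}\right)\cong \DT^{\hdg,\vee}_{\tilde{Q},\tilde{W},\dd}\otimes \LLL^{1/2-\chi_Q(\dd,\dd)}.
\]
Plugging in the explicit expression for $\DT^{\hdg,\vee}_{\tilde{Q},\tilde{W},\dd}$ recorded in the statement of Theorem \ref{defBM} then yields the claimed formula, with purity of Tate type inherited from the right-hand side.

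The only delicate step is justifying the cancellation of $\HO_c(\pt/\CC^*)$. Since $\kac_{Q,\dd}$ is an honest polynomial, $\DT^{\hdg,\vee}_{\tilde{Q},\tilde{W},\dd}$ is a finite-dimensional pure Tate-type monodromic mixed Hodge structure, which forces $\HO_c(\Msp_{\dd}(\Pi_{Q,\mu}),\mathbb{Q})$ to be finite-dimensional as well. At the numerical level, tensoring with $\HO_c(\pt/\CC^*)=\bigoplus_{k\geq 0}\LLL^{-1-k}$ corresponds to multiplication by a unit in the ring of formal power series in $\LLL^{-1/2}$, hence is injective on Laurent polynomials in $\LLL^{\pm 1/2}$; combined with semisimplicity of the category of pure Tate-type monodromic mixed Hodge structures (as already exploited at the end of the proof of Theorem \ref{main_thm}), this upgrades the numerical cancellation to an isomorphism of mixed Hodge structures. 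This is the main, and essentially only, technical point of the argument.
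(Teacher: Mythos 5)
Your argument is correct and follows the paper's own route: the paper likewise reads the corollary off the $n=1$ (equivalently, $\mathbb{N}^{Q_0}$-degree $\dd$) piece of \eqref{VBs}, combined with the trivial $\mathrm{B}\mathbb{C}^*$-gerbe splitting $\HO_c(\Mst_{\dd}(\Pi_{Q,\mu}),\mathbb{Q})\cong \HO_c(\Msp_{\dd}(\Pi_{Q,\mu}),\mathbb{Q})\otimes \HO_c(\pt/\mathbb{C}^*,\mathbb{Q})$. Your explicit justification of cancelling the common factor $\HO_c(\pt/\mathbb{C}^*)$ (via finite-dimensionality, Tate purity and semisimplicity) is a detail the paper leaves implicit, but it is the right one and does not change the approach.
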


\section{Further directions}
\subsection{Calculating the BPS sheaves}
While our main theorem gives a way to calculate the BPS cohomology $\DT_{\tilde{Q},\tilde{W}^{\mu},\dd}$ for arbitrary $Q$, $\dd\in \dvs$ and $\mu\in R$, the actual BPS sheaf $\DTS_{\tilde{Q},\tilde{W}^{\mu},\dd}$ remains a little mysterious.  We can at least generalise (part of) the support lemma (Lemma \ref{support_theorem}) from the case $\mu=0$.  
\begin{lemma}
For a quiver $Q$, and dimension vector $\dd\in\dvs$, we have
\begin{align*}
\supp(\DTS_{\Pi^{\mu}_Q,\dd})\subset\begin{cases}\tilde{e}(\Msp_{\dd}(\Pi_Q)\times\mathbb{A}^1)& \textrm{if } \mu\cdot \dd=0\\
\tilde{e}(\Msp_{\dd}(\Pi_Q)\times\{0\})&\textrm{if }\mu\cdot\dd\neq 0\end{cases}
\end{align*}
where $\tilde{e}$ is defined in \eqref{edef}.
\end{lemma}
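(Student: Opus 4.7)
My plan is to combine Proposition~\ref{sspp} on the central character of simple $\Pi_Q^\mu$-modules with the general principle (employed in \cite{preproj} for $\mu = 0$) that the BPS sheaf is supported on the closure of the simple locus of $\Msp_\dd(\Pi_Q^\mu) \hookrightarrow \Msp_\dd(\tilde Q)$.

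First I would observe that $\DTS_{\Pi_Q^\mu,\dd}$ is supported in the critical locus of $\Tr(\tilde W^\mu)$, namely $\Msp_\dd(\Pi_Q^\mu)$. Then, via the cohomological integrality theorem together with semisimplicity of $\phin{\Tr(\tilde W^\mu)}\ICS$ (following the argument for $(\tilde Q, \tilde W)$ in \cite{preproj}), all contributions supported on strictly lower-dimensional semisimple strata are absorbed into the $\boxtimes_\oplus$-symmetric-product part of the decomposition, so the ``new'' piece at dimension $\dd$ is supported on the closure of the simple locus $\Msp^{\simp}_\dd(\Pi_Q^\mu)$.

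The key structural input is the following consequence of Proposition~\ref{sspp}: for any $\dd$-dimensional simple $\Pi_Q^\mu$-module $\rho$, the central element $\omega = \sum_{i \in Q_0}\omega_i$ acts by a scalar $\lambda\in\mathbb{C}$. Since $\omega_i = e_i\omega$ decomposes via the idempotents at each vertex, it follows immediately that $\rho(\omega_i) = \lambda\cdot\Id_{\dd_i}$ for every $i$; that is, all loop operators act as the \emph{same} scalar. When $\mu\cdot\dd\ne 0$, Proposition~\ref{sspp} forces $\lambda = 0$, and the relation $\sum_{a\in Q_1}[a,a^*] + \mu\omega = 0$ collapses to the preprojective relation $\sum[a,a^*] = 0$ on the underlying $\overline Q$-representation of $\rho$. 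Hence $\rho = \tilde e(\rho_0, 0)$ for a simple $\Pi_Q$-module $\rho_0$, and taking closures gives $\supp(\DTS_{\Pi_Q^\mu,\dd}) \subset \tilde e(\Msp_\dd(\Pi_Q) \times \{0\})$, which proves the second case.

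For the first case ($\mu\cdot\dd = 0$), the subcase $\mu = 0$ is exactly Lemma~\ref{support_theorem}. The main obstacle is the subcase $\mu\ne 0$, $\mu\cdot\dd = 0$, where simples may have $\lambda\ne 0$ so that their underlying $\overline Q$-module is a $\Pi_{Q,-\lambda\mu}$-module rather than literally a $\Pi_Q$-module. To handle this I would deform along the $\mathbb{A}^1$-family of potentials $\{\tilde W^{s\mu}\}_{s\in\mathbb{A}^1}$: at $s = 0$ the support lies in $\tilde e(\Msp_\dd(\Pi_Q)\times\mathbb{A}^1)$ by Lemma~\ref{support_theorem}, and the pushforward identification of Remark~\ref{changeN} (lifted to the level of monodromic mixed Hodge modules) together with upper semi-continuity of support over $\mathbb{A}^1_s$ should propagate the containment to $s = 1$. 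Making this semi-continuity rigorous in the constructible-sheaf setting, while tracking how the $\lambda\ne 0$ slices of the simple locus collapse into the image of $\tilde e$ in the limit, is the main technical point.
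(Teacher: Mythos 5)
Your overall strategy (centrality of $\omega$ plus the trace computation of Proposition \ref{sspp}) is the right one, and the trace step is exactly what the paper does; but there are two genuine gaps. First, for the case $\mu\cdot\dd\neq 0$ you reduce to the simple locus and then ``take closures''. The paper does not do this, and cannot: the support of a BPS sheaf is in general strictly larger than the closure of the simple locus (already for preprojective algebras it has summands supported on deep strata of decomposable modules), so establishing the containment on simples and closing up does not suffice. Concretely, a semisimple point $\rho=\bigoplus_j\rho_j$ of the support with $\sum_j\dd_j=\dd$ and $\mu\cdot\dd\neq 0$ may have summands with $\mu\cdot\dd_j=0$, on which Proposition \ref{sspp} places no constraint on the scalar $\lambda_j$; such a point need not lie in the closure of the $\dd$-dimensional simple locus, and your argument does not exclude it from the support. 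What the paper uses instead is the support argument of \cite[Lem.4.1]{preproj}: for \emph{any} semisimple $\rho$ in the support, the cohomological integrality theorem forces all generalised eigenvalues of $\rho(\omega)$ to coincide, so $\rho(\omega)=\lambda\Id$ on all of $\rho$; applying the trace argument of Proposition \ref{sspp} to the whole module then gives $\lambda\,(\mu\cdot\dd)=0$, hence $\lambda=0$. This eigenvalue-coincidence step is the missing ingredient in your proposal.

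Second, for the case $\mu\cdot\dd=0$ with $\mu\neq 0$ you have correctly isolated the delicate point: a simple module with $\rho(\omega)=\lambda\Id$ and $\lambda\neq 0$ has underlying $\overline{Q}$-module satisfying the deformed preprojective relation with parameter $-\lambda\mu$ rather than the undeformed one. However, your proposed repair does not work as stated: semicontinuity of supports along the family $\{\tilde{W}^{s\mu}\}_{s\in\mathbb{A}^1}$ gives no containment at $s=1$ from the containment at $s=0$ (supports of vanishing-cycle sheaves can jump in such families, and Remark \ref{changeN} identifies pushforwards to $\Msp(Q)\times\mathbb{A}^1$, not supports inside $\Msp(\tilde{Q})$). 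For comparison, the paper's own proof of this case goes no further than ``$\rho(\omega)$ acts by a scalar'' and does not address the $\Pi_{Q,-\lambda\mu}$ issue you raise; the conclusion actually delivered by the method in this case is that the support lies in the locus of semisimple modules on which $\rho(\omega)$ is a scalar $\lambda$ and whose underlying $\overline{Q}$-module is a $\Pi_{Q,-\lambda\mu}$-module, which agrees with the stated containment only when $\lambda=0$ or $\mu=0$. So your instinct that something extra is needed here is sound, but the deformation argument is not a proof, and it is not the route the paper takes.
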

\begin{proof}
The proof is very similar to \cite[Lem.4.1]{preproj}.  Let $\rho$ be a $\Jac(\tilde{Q},\tilde{W}^{\mu})$-module lying in the support of $\DTS_{\tilde{Q},\tilde{W}^{\mu},\dd}$.  By definition, $\rho$ is semisimple, and so since $\rho(\sum_{i\in Q_0}\omega_i)$ is central, it acts via a diagonal matrix.  Arguing as in \cite[Lem.4.1]{preproj} and using centrality of $\rho(\sum_{i\in Q_0}\omega_i)$ along with the cohomological integrality theorem, we deduce that the generalised eigenvalues of $\rho(\omega)$ are all the same, i.e. $\rho(\omega)$ acts via scalar multiplication, establishing the lemma in the case $\mu\cdot\dd\neq 0$.  Arguing as in Proposition \ref{sspp}, if $\mu\cdot\dd\neq 0$, we must have $\lambda=0$.  This establishes the lemma in the case $\mu\cdot\dd\neq 0$.
\end{proof}
Even in the case $\mu\cdot \dd=0$ this is a slightly weaker statement than Lemma \ref{support_theorem}; it is harder to show that $\DTS_{\tilde{Q},\tilde{W}^{\mu},\dd}$ is $\mathbb{A}^1$-equivariant (where $\mathbb{A}^1$ acts by adding a scalar multiple of the identity matrix to $\rho(\omega)$) since the function $\Tr(\tilde{W}^{\mu})$ is not $\mathbb{A}^1$-invariant unless $\mu=0$.
\smallbreak
It would be interesting to compare $\DTS_{\tilde{Q},\tilde{W}^{\mu},\dd}$ with $\DTS_{\tilde{Q},\tilde{W},\dd}\cong \tilde{e}_!\left(\DTS_{\Pi_Q,\dd}\boxtimes \ICS_{\mathbb{A}^1}\right)$ in the case $\mu\cdot\dd=0$, and with $\tilde{e}_!\left(\DTS_{\Pi_Q,\dd}\boxtimes \mathbb{Q}_0\right)$ in the case $\mu\cdot \dd\neq 0$.  In particular, we may ask the following question:
\begin{question}
\label{PQ}
Is the monodromic mixed Hodge module $\DTS_{\tilde{Q},\tilde{W}^{\mu},\dd}$ pure?
\end{question}
By \cite[Thm.A]{preproj3}, we know that $\DTS_{\tilde{Q},\tilde{W},\dd}$ is a pure monodromic mixed Hodge module, i.e. we know that for $\mu=0$ the answer to Question \ref{PQ} is yes.
\subsection{The BPS algebra}
Identifying the BPS cohomology of the Jacobi algebra $\CC(\tilde{Q},\tilde{W}^{\mu})$ is only part of understanding its cohomological DT theory; it tells us the size of the graded pieces of $\Coha_{\tilde{Q},\tilde{W}^{\mu}}$, but nothing about the algebra structure above what we already know from \cite{QEAs} regarding general quivers with potential (e.g. the PBW theorem).  At least for $\mu=0$, the algebra $\Coha_{\tilde{Q},\tilde{W}^{\mu}}$ has been studied from various points of view (see e.g. \cite{Nak01,Var00,MO19,KV19, DPS20}) while for the $\mu\neq 0$ case, it would be interesting (at least for noncommutative resolutions of toric CY3s) to relate these algebras to the Yangians defined in terms of (generalised) McMahon modules and crystal melting in \cite{LY20,GY20}.
\smallbreak
For general $Q$ and $\mu$, the shifted BPS cohomology $\mathfrak{g}_{\tilde{Q},\tilde{W}^{\mu}}\coloneqq \DT_{\tilde{Q},\tilde{W}^{\mu}}[-1]$ carries a Lie algebra structure (by Theorem \ref{CIT}).  Even in the case $\mu=0$ we do not yet fully understand this Lie algebra for general $Q$.  We can at least try to relate the case of general $\mu$ to the case $\mu=0$, via the following construction:
\smallbreak
Fix $Q$ and $\mu$, and define
\begin{align*}
\mathfrak{g}_{\Even}\coloneqq &\bigoplus_{\dd\in\dvs \;\lvert\; \dd\cdot\mu=0}\mathfrak{g}_{\tilde{Q},\tilde{W},\dd}\\
\mathfrak{g}_{\Odd}\coloneqq &\bigoplus_{\dd\in\dvs \;\lvert\; \dd\cdot\mu\neq 0}\mathfrak{g}_{\tilde{Q},\tilde{W},\dd}.
\end{align*}
Then $\mathfrak{g}_{\Even}$ is a Lie subalgebra of $\mathfrak{g}_{\tilde{Q},\tilde{W}}$, and $\mathfrak{g}_{\Odd}$ is a Lie module for it, and we consider the extension 
\[
\mathfrak{g}^{\mu}=\mathfrak{g}_{\Even}\oplus \mathfrak{g}_{\Odd}[-1],
\]
with the Lie bracket 
\[
[(\alpha, \beta),(\alpha', \beta')]=([\alpha,\alpha'], [\alpha,\beta']-[\alpha',\beta]).
\]
\begin{question}
\label{BPSLAconj}
Is there is an isomorphism of Lie algebras 
\[
\mathfrak{g}^{\mu}\cong\mathfrak{g}_{\tilde{Q},\tilde{W}^{\mu}}?
\]
\end{question}
\begin{proposition}
\label{partial_a}
The answer to Question \ref{BPSLAconj} is yes in the case $\mu=0$, and for generic $\mu$.
\end{proposition}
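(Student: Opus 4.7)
The $\mu = 0$ case is tautological: every $\dd\in\dvs$ satisfies $\dd\cdot\mu = 0$, so $\mathfrak{g}_{\Odd} = 0$, $\mathfrak{g}_{\Even} = \mathfrak{g}_{\tilde{Q},\tilde{W}}$, and $\mathfrak{g}^0$ coincides on the nose with $\mathfrak{g}_{\tilde{Q},\tilde{W}^0}$ as Lie algebras; I would dispatch this case in a single sentence. For generic $\mu$ the only $\dd\in\dvs$ with $\dd\cdot\mu = 0$ is $\dd = 0$, where the BPS Lie algebra is zero, so $\mathfrak{g}_{\Even} = 0$ and $\mathfrak{g}_{\Odd} = \mathfrak{g}_{\tilde{Q},\tilde{W}}$. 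Inspection of the semidirect-product formula then shows that every bracket $[(\alpha,\beta),(\alpha',\beta')] = ([\alpha,\alpha'], [\alpha,\beta']-[\alpha',\beta])$ has both coordinates identically zero, so $\mathfrak{g}^\mu$ is the shifted abelian Lie algebra $\mathfrak{g}_{\tilde{Q},\tilde{W}}[-1]$.

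The main task is therefore to show that $\mathfrak{g}_{\tilde{Q},\tilde{W}^\mu}$ is itself abelian for generic $\mu$, after which the graded vector space isomorphism supplied by Theorem \ref{main_thm} will automatically upgrade to a Lie algebra isomorphism between two abelian Lie algebras. My plan is a parity argument. Since $\dd\cdot\mu\neq 0$ for every $\dd\neq 0$, Theorem \ref{main_thm} gives $\poin(\DT_{\tilde{Q},\tilde{W}^\mu,\dd}, q^{1/2}) = \kac_{Q,\dd}(q^{-1})$, which involves only integer powers of $q$. Hence $\DT_{\tilde{Q},\tilde{W}^\mu,\dd}$ is concentrated in even cohomological degrees, and $\mathfrak{g}_{\tilde{Q},\tilde{W}^\mu,\dd} = \DT_{\tilde{Q},\tilde{W}^\mu,\dd}[-1]$ sits entirely in odd degrees. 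Since $\tilde{Q}$ is symmetric and the sign twist on the CoHA is trivial for tripled quivers, the Hall product preserves cohomological degree additively, so the commutator bracket of an element of $\mathfrak{g}_{\tilde{Q},\tilde{W}^\mu,\dd_1}$ with one of $\mathfrak{g}_{\tilde{Q},\tilde{W}^\mu,\dd_2}$ lands in the even-degree part of $\mathfrak{g}_{\tilde{Q},\tilde{W}^\mu,\dd_1+\dd_2}$. For $\dd_1+\dd_2\neq 0$ this even part is zero, so the bracket vanishes identically.

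The hard part, and the reason this method does not directly settle Question \ref{BPSLAconj} for intermediate $\mu$, is that when $\mu$ is neither zero nor generic the BPS Lie algebra has both even-degree summands (on dimension vectors with $\dd\cdot\mu = 0$) and odd-degree summands (on dimension vectors with $\dd\cdot\mu\neq 0$). The parity obstruction still kills brackets between two odd pieces and cross-brackets that land in the wrong parity slot, but it cannot rule out nontrivial brackets between two even pieces; and this is exactly where the semidirect-product structure of $\mathfrak{g}^\mu$ becomes nontrivial. Matching the two bracket structures in that generality would seem to require lifting the deformed dimensional reduction of Theorem \ref{DDR_theorem} to the Hall multiplication itself, not just to the BPS cohomology.
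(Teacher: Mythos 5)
Your proof is correct and follows essentially the same route as the paper: the $\mu=0$ case is dispatched as a tautology, and for generic $\mu$ both arguments reduce to showing that $\mathfrak{g}_{\tilde{Q},\tilde{W}^{\mu}}$ is concentrated in odd cohomological degree (via Theorem \ref{main_thm} and the Kac polynomial description) and then invoking additivity of cohomological degree under the Hall product to force the commutator bracket to vanish. The closing paragraph on intermediate $\mu$ is accurate commentary but not needed for the statement.
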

\begin{proof}
The case $\mu=0$ is trivial, since then
\begin{align*}
\mathfrak{g}^{\mu}=&\mathfrak{g}_{\Even}\\
=&\mathfrak{g}_{\tilde{Q},\tilde{W}}
\end{align*}
and $\tilde{W}^{\mu}=\tilde{W}$.  In the case of generic $\mu$, Theorem \ref{main_thm} tells us that there is an isomorphism
\begin{equation}
\label{skac}
\mathfrak{g}_{\tilde{Q},\tilde{W}^{\mu}}\cong \mathfrak{g}_{\tilde{Q},\tilde{W}}[-1]
\end{equation}
as cohomologically graded vector spaces, and so it is sufficient to show that the Lie bracket on $\mathfrak{g}_{\tilde{Q},\tilde{W}^{\mu}}$ vanishes, since the Lie bracket on $\mathfrak{g}^{\mu}\coloneqq \mathfrak{g}_{\Odd}[-1]$ does by definition.  Combining \eqref{skac} and \eqref{Kacg}, we deduce that $\mathfrak{g}_{\tilde{Q},\tilde{W}^{\mu}}$ lies entirely in odd cohomological degree.  Since the Hall algebra multiplication, and hence the commutator Lie bracket, preserves cohomological degree, the Lie bracket vanishes as required.
\end{proof}
In this paper, we have ignored completely the question of whether there is a natural double to the Lie algebra $\mathfrak{g}_{\tilde{Q},\tilde{W}^{\mu}}$.  Despite Proposition \ref{partial_a} the expected answer to Question \ref{BPSLAconj}, once extended to the double of $\mathfrak{g}_{\tilde{Q},\tilde{W}^{\mu}}$ is \textit{no}; one of the motivations for this paper is work of Kevin Costello \cite{KMSRI}, in which he conjectures that in the case of e.g. the resolved conifold, the Lie bracket does not vanish on the fermionic part of the (doubled) BPS Lie algebra.  As we saw in \S \ref{ERC_sec}, and Example \ref{nccf} the (noncommutative) resolved conifold is a \textit{partial} fermionization of the QP $(\tilde{Q},\tilde{W})$ where $\tilde{Q}$ is the affine $A_1$ quiver; so the conifold is precisely the kind of case that Proposition \ref{partial_a} does not cover.
\subsection{Representation theory}
The boson-fermion correspondence is typically considered (by mathematicians) to be a part of representation theory, and so it would be remiss to finish the paper without saying \textit{anything} about representations of $\Coha_{\tilde{Q},\tilde{W}^{\mu}}$.  
\smallbreak
Fix a quiver $Q$, and a framing dimension vector $\ff\in \dvs$.  We form the quiver $Q_{\ff}$ by adding one extra vertex (labelled $\infty$) to $Q_0$, and $\ff_i$ arrows from $\infty$ to $i$ for each $i\in Q_0$.  Set $\mathscr{Q}=\widetilde{Q_{\ff}}$.  In other words, this is the usual doubled framed quiver that one uses to define Nakajima quiver varieties, but with an extra loop at every vertex (including a loop $\omega_{\infty}$ at the framing vertex).  
\smallbreak
For $\mu=\sum_{i\in Q_0} \mu_i e_i$ we define
\[
\tilde{W}_{\ff}^{\mu}=(\sum_{i\in \mathscr{Q}_0}\omega_i) (\sum_{a\in Q_{\ff}}[a,a^*])+\sum_{i\in Q_0}\mu_i\omega_i^2.
\]
\smallbreak
We write dimension vectors for $\mathscr{Q}$ as $\ee=(\dd,n)$ where $\dd\in\dvs$ and $n=\ee_{\infty}\in\mathbb{N}$.  We define
\[
\AA_{(\dd,1)}^{\stab}(\mathscr{Q})\subset \AA_{(\dd,1)}(\mathscr{Q})
\]
to be the subvariety containing those $\rho$ such that $\rho_{\infty}$ generates $\rho$ as a $\CC\mathscr{Q}$-module.  We call such $\CC\mathscr{Q}$-modules stable.  Then we define
\[
\mathscr{N}_{\ff,\dd}(Q)\coloneqq \AA_{(\dd,1)}^{\stab}(\mathscr{Q})/\Gl_{\dd}.
\]
This is a smooth variety, and is naturally isomorphic to the stack of stable $(\dd,1)$-dimensional $\CC\mathscr{Q}$-modules along with a trivialisation $\rho_{\infty}\cong\CC$.  We define
\[
\Flag_{(\dd,1),\dd'}(\mathscr{Q})
\]
to be the stack of pairs of a stable $(\dd,1)$-dimensional $\CC \mathscr{Q}$-module $\rho$, along with a $(\dd',0)$-dimensional submodule $\rho'\subset \rho$ and a trivialisation $\rho_{\infty}\cong\CC$.  Set $\dd''=\dd-\dd'$.  In the correspondence diagram
\[
\xymatrix{
\Mst_{\dd'}(\widetilde{Q})\times\mathscr{N}_{\ff,\dd''}(Q)&&\ar[ll]_-{\pi_1\times\pi_3}\Flag_{(\dd,1),\dd'}(\mathscr{Q})\ar[rr]^-{\pi_2}&&\mathscr{N}_{\ff,\dd}(Q)
}
\]
the morphism $\pi_2$ is proper, and so via push-forward and pull-back in vanishing cycle cohomology, we obtain an action of $\Coha_{\tilde{Q},\tilde{W}^{\mu}}$ on 
\[
\mathbf{M}^{\mu}_{\ff}(Q)\coloneqq\bigoplus_{\dd\in\dvs}\HO(\mathscr{N}_{\ff,\dd''}(Q),\phim{\Tr(\tilde{W}_{\ff}^{\mu})}\underline{\mathbb{Q}}_{\mathscr{N}_{\ff,\dd''}(Q)})\otimes \LLL^{\chi_{\mathscr{Q}}\left((1,\dd),(1,\dd)\right)/2}.
\]
Given a quiver $Q$ we denote by
\begin{align*}
\mu_{Q,\dd}\colon \mathbb{A}_{\dd}(\overline{Q})&\rightarrow \mathfrak{\gl}_{\dd}\\
\rho&\mapsto \sum_{a\in Q_1}[\rho(a),\rho(a^*)].
\end{align*}
the usual moment map.  We define
\[
Z(\ff,\dd)\subset \mathbb{A}^{\stab}_{(\dd,1)}(\overline{Q_{\ff}})
\]
the vanishing locus of the composition of $\mu_{Q_{\ff},(\dd,1)}$ with the natural projection $\mathfrak{\gl}_{(\dd,1)}\rightarrow \mathfrak{\gl}_{\dd}$.  Here the stability condition is the same as above: we restrict to the open locus of those $\rho$ such that $\rho_{\infty}$ generates.  Then the Nakajima quiver variety is defined to be the smooth variety
\[
X(\ff,\dd)=Z(\ff,\dd)/\Gl_{\dd}.
\]
\begin{proposition}
\label{NakSplit}
Denote by $g$ the restriction of the function $\Tr(\tilde{W}_{\ff}^{\mu})$ to $\mathscr{N}_{\ff,\dd}(Q)$, then
\[
\crit(g)\cong \begin{cases}X(\ff,\dd)&\textrm{if }\mu\cdot\dd\neq 0\\
X(\ff,\dd)\times\mathbb{A}^1&\textrm{if }\mu\cdot\dd=0.
\end{cases}
\]
Moreover the monodromic mixed Hodge module
\[
\phim{g}\underline{\mathbb{Q}}_{\mathscr{N}_{\ff,\dd}(Q)}\otimes \LLL^{\chi_{\mathscr{Q}}\left((\dd,1),(\dd,1)\right)/2}
\]
is analytically locally isomorphic to the constant mixed Hodge module on the critical locus of $g$.
\end{proposition}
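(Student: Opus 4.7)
The plan is to compute $\crit(g)$ by direct analysis of the partial derivatives of $\Tr(\tilde{W}_\ff^\mu)$ and then use Theorem~\ref{DDR_theorem} (deformed dimensional reduction) to pin down the monodromic mixed Hodge module. The partials yield three families of equations: $\partial g/\partial\omega_i = \mu_{Q_\ff,(\dd,1),i}(\rho) + 2\mu_i\omega_i = 0$ for $i\in Q_0$; $\partial g/\partial\omega_\infty = \mu_{Q_\ff,(\dd,1),\infty}(\rho) = 0$; and the arrow derivatives forcing $[\omega,a] = [\omega,a^*] = 0$ for every $a\in(Q_\ff)_1$. Since $\rho$ is stable as a $\CC Q_\ff$-module, $\omega$ is a $\CC Q_\ff$-endomorphism of $\rho$, hence a single scalar $\lambda\in\CC$ on every vertex space (including $V_\infty$). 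Summing traces of the $i\in Q_0$ equations and using the equation at $\infty$ produces the consistency relation $2\lambda(\mu\cdot\dd) = 0$.

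If $\mu\cdot\dd\neq 0$, this forces $\lambda = 0$ and the remaining equations cut out exactly $X(\ff,\dd)$. If $\mu\cdot\dd = 0$, $\lambda$ is free, and $\crit(g)$ fibres over $\AA^1_\lambda$ with fibre equal to the framed stable moduli of the deformed preprojective algebra $\Pi_{Q,-2\lambda\mu}$. To trivialise this family as $X(\ff,\dd)\times\AA^1$, I exploit the scaling symmetry $(a,a^*,b_{i,k},b_{i,k}^*)\mapsto(sa,sa^*,sb_{i,k},sb_{i,k}^*)$, which identifies the $\lambda$-fibre with the $s^{-2}\lambda$-fibre for any $s\in\CC^*$, and I patch across $\lambda = 0$ using an explicit $\lambda$-dependent change of variables in the loop directions that absorbs the deformation, modelled on the noncommutative square-completion of Example~\ref{ConEx}.

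For the mixed Hodge module statement I apply Theorem~\ref{DDR_theorem} with $X = \AA_{(\dd,1)}(Q_\ff)$, with $\AA^m$ the opposite arrows $a^*,b_{i,k}^*$ and $\AA^{n-m}$ the loops $\omega_i,\omega_\infty$; letting $\CC^*$ act with weight one on both summands makes $\Tr(\tilde{W}_\ff^\mu)$ semi-invariant of weight two. The potential is linear in the opposite arrows with coefficients $[\omega,a]$, plus the loop-only part $g_0 = \sum_{i\in Q_0}\mu_i\Tr(\omega_i^2)$. Deformed dimensional reduction then transfers the computation of $\phim{g}\underline{\QQ}$ to that of $\phim{g_0}\underline{\QQ}_Z\otimes\LLL^m$, where $Z$ is the $\omega$-commuting locus. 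On the stable part of $Z$ (where $\omega_i = \lambda\id$), $g_0$ restricts to $\lambda^2(\mu\cdot\dd)$: a nondegenerate quadratic in $\lambda$ when $\mu\cdot\dd\neq 0$ (killing the $\AA^1_\lambda$-direction) and identically zero when $\mu\cdot\dd = 0$ (preserving it). Thom--Sebastiani together with the vanishing cycle computation for a nondegenerate quadratic form then identifies $\phim{g}\underline{\QQ}$ with the constant MHM on $\crit(g)$ up to the Lefschetz twist, and combining the DDR shift $\LLL^m$ with the quadratic form Lefschetz twist reproduces $\LLL^{\chi_{\mathscr{Q}}((\dd,1),(\dd,1))/2}$.

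The main obstacle is the $\AA^1_\lambda$ trivialisation when $\mu\cdot\dd = 0$: scaling alone identifies the fibres over $\lambda\neq 0$ with one another, but producing the global algebraic isomorphism across $\lambda = 0$ requires the explicit $\lambda$-dependent loop coordinate change described above.
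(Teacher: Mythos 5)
Your analysis of $\crit(g)$ follows the same route as the paper, which simply invokes the argument of Proposition \ref{sspp}: on the critical locus $\omega$ commutes with all arrows, stability forces $\omega=\lambda\cdot\id$, and taking traces of the moment-map equations gives $2\lambda(\mu\cdot\dd)=0$. You are right to isolate the trivialisation of the $\AA^1_{\lambda}$-family in the case $\mu\cdot\dd=0$ as the real difficulty, but the repair you propose cannot work. The scaling action only identifies the fibres over $\lambda\neq 0$ with one another, and a $\lambda$-dependent change of variables in the loop directions cannot help: on $\crit(g)$ the loops are already pinned to $\lambda\cdot\id$, and the deformation sits in the moment-map equation for the doubled arrows, $e_i\bigl(\sum_a[a,a^*]\bigr)e_i=-2\lambda\mu_i\id$. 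The fibre over $\lambda$ is thus the framed stable moduli of $\Pi_{Q,-2\lambda\mu}$, and such deformed fibres are in general not algebraically isomorphic to the central one: for the quiver \eqref{Qdef} with $\ff=(1,0)$, $\dd=(1,1)$, $\mu=(1,-1)$ one computes $\crit(g)\cong\Tot(\mathcal{O}_{\PP^1}(-1)^{\oplus 2})$, whereas $X(\ff,\dd)\times\AA^1\cong\Tot(\mathcal{O}_{\PP^1}(-2))\times\AA^1$; the family over $\AA^1_{\lambda}$ is a smooth, topologically trivial but algebraically non-trivial family. So no fibrewise identification of the kind you describe exists; note that the paper's own one-line proof of the first part establishes only the scalar statement for $\omega$ and is silent on the product decomposition, and what is actually used downstream in \eqref{pfrep} is only the resulting identification of cohomology.

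For the second claim the paper argues quite differently, and with the right tool for a local statement: since $\crit(g)$ is scheme-theoretically smooth, the holomorphic Bott--Morse lemma writes $g$ analytically locally as $x_1^2+\cdots+x_c^2$ in coordinates transverse to $\crit(g)$, and the vanishing-cycle sheaf of a nondegenerate quadratic form is immediately seen to be (locally) constant of rank one on the critical locus. Your appeal to Theorem \ref{DDR_theorem} cannot establish this. First, deformed dimensional reduction computes $\pi_!\phim{g}$ \emph{after} pushing forward along the projection that forgets the affine directions, so it carries no information about the analytic-local structure of $\phim{g}\underline{\mathbb{Q}}_{\mathscr{N}_{\ff,\dd}(Q)}$ on the total space, which is what the proposition asserts. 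Second, the theorem requires the ambient space to be a $G$-equivariant product $X\times\AA^m\times\AA^{n-m}$ with the function decomposing accordingly, whereas $\mathscr{N}_{\ff,\dd}(Q)$ is the quotient of the \emph{stable} locus, an open subset cut out by a condition involving the opposite arrows and the loops; it is not of the required product form, so the theorem does not apply on this open substack without further argument.
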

\begin{proof}
The second part follows from the first: the holomorphic Bott--Morse lemma tells us that since the critical locus of $g$ is scheme-theoretically smooth, $g$ can be written analytically locally around $\crit(g)$ in the form
\[
g=x_1^2+\ldots+x_c^2
\]
where $c$ is the codimension of $\crit(g)$ inside $\mathcal{N}_{\ff,\dd}(Q)$.  The first part follows by the same argument as Proposition \ref{sspp}.
\end{proof}
In the special case $\mu=0$ it is possible to show that the monodromy of the rank one local system $\phin{g}\underline{\mathbb{Q}}_{\mathscr{N}_{\ff,\dd''}(Q)}$ is trivial \cite[Prop.6.3]{preproj3}, and we conjecture that this is always true.  Assuming the conjecture, Proposition \ref{NakSplit} implies that there are isomorphisms
\begin{equation}
\label{pfrep}
\mathbf{M}^{\mu}_{\ff}(Q)\cong \left(\bigoplus_{\substack{\dd\in\dvs\\ \mu\cdot\dd\neq 0}}\HO(X(\ff,\dd),\mathbb{Q})_{\vir}\right)\oplus \left(\bigoplus_{\substack{\dd\in\dvs\\ \mu\cdot\dd= 0}}\HO(X(\ff,\dd),\mathbb{Q})_{\vir}\otimes \LLL^{-1/2}\right).
\end{equation}
Note that the vacuum vector, spanning $\HO(X(\ff,0),\mathbb{Q})$, lies in the second summand.  
\smallbreak
Let $Q'$ be the full subquiver of $Q$ obtained by removing all vertices that support loops, as well as arrows to or from them, and let $\mathfrak{n}^-_{Q'}$ be the negative piece of the Kac--Moody Lie algebra associated to $Q'$.  By \cite[Thm.6.6]{preproj3} there is an inclusion of Lie algebras
\[
\mathfrak{n}^-_{Q'}\hookrightarrow \mathfrak{g}_{\tilde{Q},\tilde{W}}
\]
as the part of the BPS Lie algebra lying in cohomological degree zero, so that \eqref{pfrep} in case $\mu=0$ allows us to reconstruct (one half of) Nakajima's action of $\mathfrak{g}_{Q'}$ on the cohomology of quiver varieties \cite{Nak98}.  The conjecture suggests that the cohomology of Nakajima quiver varieties should be as crucial to the representation theory of partially fermionized BPS Lie algebras as they are to the representation theory of their bosonic counterparts.
\bibliographystyle{alpha}
\bibliography{Literatur}

\end{document}